\newtheorem{lemma}{Lemma}
\newtheorem{proposition}{Proposition}
\newtheorem{theorem}{Theorem}
\newtheorem{corollary}{Corollary}
\theoremstyle{remark}
\newtheorem*{remark}{Remark}
\DeclareMathOperator{\Spec}{Spec}
\DeclareMathOperator{\Pic}{Pic}
\DeclareMathOperator{\Gal}{Gal}
\DeclareMathOperator{\cris}{cris}
\DeclareMathOperator{\Br}{Br}
\DeclareMathOperator{\dR}{dR}
\DeclareMathOperator{\cha}{char}
\DeclareMathOperator{\spe}{sp}
\DeclareMathOperator{\red}{red}
\DeclareMathOperator{\Hom}{Hom}
\DeclareMathOperator{\cont}{cont}
\DeclareMathOperator{\im}{im}
\DeclareMathOperator{\pr}{pr}
\DeclareMathOperator{\ur}{ur}
\DeclareMathOperator{\new}{new}
\DeclareMathOperator{\old}{old}
\DeclareMathOperator{\et}{\acute{e}t}
\DeclareMathOperator{\tee}{t}
\newcommand{\trans}[1]{\,^{\tee}{#1}}
\begin{document}

\title{On Tate's conjecture for elliptic modular surfaces over finite fields}

\author{R\'emi Lodh}
\email{remi.shankar@gmail.com}
\subjclass[2010]{Primary: 11G05, Secondary: 11F11, 14F30}
\keywords{elliptic curves, modular forms, $p$-adic cohomology, zeta function} 

\begin{abstract}
For $N\geq 3$, we show Tate's conjecture for the elliptic modular surface $E(N)$ of level $N$ over $\mathbb{F}_p$ for a prime $p$ satisfying $p\equiv 1\mod N$ outside of a set of primes of density zero. We also prove a strong form of Tate's conjecture for $E(N)$ over any finite field of characteristic $p$ prime to $N$ under the assumption that the formal Brauer group of $E(N)$ is of finite height.
\end{abstract}

\maketitle

\section*{Introduction}
In this paper we study cohomology classes of divisors on the elliptic modular surface $E(N)$ of level $N$, where $N\geq 3$.  By definition, $E(N)$ is the universal object over the moduli space $X(N)$ of generalized elliptic curves with level $N$ structure. Fix a prime $p$ which does not divide $N$. Our first result is the following theorem, which goes back to Shioda \cite[Appendix]{shioda1} for $N=4$.

\begin{theorem}[Corollary \ref{tate}]\label{main}
Assume the partial semi-simplicity conjecture (\ref{SS}) is true for $E(N)_{\mathbb{F}_p}$. If $p\equiv 1\mod N$, then Tate's conjecture holds for each connected component of $E(N)_{\mathbb{F}_p}$. Moreover, the Mordell-Weil group of a generic fibre of $E(N)_{\mathbb{F}_p}\to X(N)_{\mathbb{F}_p}$ is isomorphic to $(\mathbb{Z}/N)^2$.
\end{theorem}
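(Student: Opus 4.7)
My strategy is to reduce the problem to an analysis of $H^2$ via the Leray spectral sequence for $\pi: E(N)\to X(N)$, and to control the Frobenius action piece by piece. First, I would decompose
\[
H^2(E(N)_{\overline{\mathbb{F}_p}}, \mathbb{Q}_\ell) \cong H^2(X(N)) \oplus H^1(X(N), R^1\pi_*\mathbb{Q}_\ell) \oplus H^0(X(N), R^2\pi_*\mathbb{Q}_\ell),
\]
the Leray spectral sequence degenerating for such an elliptic fibration with section. The outer summands consist of Tate classes in $H^2(1)$ with obvious algebraic representatives (the pullback of a point on $X(N)$ and the class of a fibre); the middle summand is the $\ell$-adic incarnation of the parabolic cohomology attached to the local system $R^1\pi_*$, and encodes the modular content.

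Second, via the Eichler-Shimura isomorphism in higher weight, I would split this middle summand Hecke-equivariantly into Eisenstein and cuspidal parts. The Eisenstein part corresponds to boundary cohomology and is spanned by the zero section together with the $N$-torsion sections coming from the level structure — all algebraic. The cuspidal part corresponds to weight $3$ newforms on $\Gamma(N)$, with Frobenius acting through the Hecke eigenvalues via Deligne's construction.

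The principal obstacle would then be to show that the hypothesis $p\equiv 1\mod N$ forces the Frobenius eigenvalues on the cuspidal part of $H^2(-,\mathbb{Q}_\ell(1))$ to be roots of unity, so that — after a harmless finite base change — all of $H^2(1)$ becomes Tate. Since the Deligne representation $\rho_f$ attached to a weight $3$ newform $f$ of nebentypus $\chi_f$ has $\det\rho_f = \chi_f\chi_\ell^2$, the congruence $p\equiv 1\mod N$ at least gives $\chi_f(p)=1$, so the Frobenius eigenvalues on $H^2(1)$ have product $1$. To pin them down individually, I would invoke the CM structure of weight $3$ cusp forms in this setting: they arise as theta series of Grossencharacters $\psi$ of infinity type $(2,0)$ on imaginary quadratic fields $K$ with conductor dividing $N\mathcal{O}_K$, and the congruence $p\equiv 1\mod N$ is precisely what forces $\psi(\mathfrak{p})/p$ to be a root of unity at primes $\mathfrak{p}\mid p$ of $K$.

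With partial semi-simplicity (\ref{SS}) in hand, Tate's conjecture then reduces to exhibiting enough algebraic classes to span the Tate subspace: the zero section and the $N$-torsion sections (accounting for the Eisenstein part), the components of singular fibres over the cusps of $X(N)$, and further cycles realizing the CM structure of the cuspidal part (e.g.\ graphs of Hecke correspondences or pullbacks from CM elliptic surfaces). The Mordell-Weil statement then falls out of the Shioda-Tate formula
\[
\rho(E(N)_{\mathbb{F}_p}) = 2 + \sum_v (m_v-1) + \rk\, MW(E_\eta),
\]
forcing $\rk\, MW(E_\eta)=0$ once $\rho(E(N)_{\mathbb{F}_p})$ matches the explicit cycle count; since $E_\eta$ is the universal elliptic curve with level $N$ structure over $\mathbb{F}_p(X(N))$, its only torsion is the tautological $(\mathbb{Z}/N)^2$.
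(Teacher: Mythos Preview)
There is a genuine gap, and it lies in the third paragraph. You assert that the weight~$3$ cusp forms on $\Gamma(N)$ ``arise as theta series of Gr\"ossencharacters $\psi$ of infinity type $(2,0)$ on imaginary quadratic fields'', i.e.\ that they all have CM. This is false for general $N$; the paper itself treats non-CM weight~$3$ newforms when discussing the validity of (\ref{SS}). Without CM, your claim that $p\equiv 1\pmod N$ forces the Frobenius eigenvalues on $H^2(1)$ to be roots of unity collapses: after the twist, the two eigenvalues $\alpha,\beta$ attached to a newform $f$ satisfy $\alpha\beta=\chi_f(p)=1$ and $|\alpha|=|\beta|=1$, but nothing more --- they can perfectly well be $e^{\pm i\theta}$ with $\theta/\pi$ irrational. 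So the cuspidal part is \emph{not} Tate in general, and the programme of ``exhibiting enough algebraic classes to span the Tate subspace'' via CM cycles cannot succeed.

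In fact the paper proves the opposite inequality to the one you are aiming at. Rather than showing the cuspidal piece is entirely Tate and then producing cycles, the paper shows that the cuspidal piece contains \emph{no} Tate classes at all over $\mathbb{F}_p$: one has $D_{\cris}(V_p\Br(E_{\bar K}))^{\varphi=1}=0$. The mechanism is $p$-adic, not $\ell$-adic. The Hodge--Tate decomposition gives $V_p\Br(E_{\bar K})$ weights $\pm 1$ only, so $F^0D^{\varphi=1}=V_p\Br(E_{\bar K})^{G_{K_0}}=0$. The Eichler--Shimura congruence $T_p|_{E_k}=F+I_p\,{}^tF$, combined with $I_p=\mathrm{id}$ when $p\equiv 1\pmod N$, shows that $T:=\varphi+\varphi^{-1}$ agrees with $p^{-1}T_p$ and hence preserves the Hodge filtration $F^1D$ (this is where the congruence condition is actually used --- not to force eigenvalues to be roots of unity). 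An elementary filtered $\varphi$-module argument (Proposition~\ref{key}), with (\ref{SS}) supplying the semisimplicity input $D^{(\varphi-1)^2=0}=D^{\varphi=1}$, then kills $D^{\varphi=1}$ entirely. Tate's conjecture and the Mordell--Weil statement follow because the N\'eron--Severi group in characteristic $p$ is no larger than in characteristic $0$, where Shioda already computed it.
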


The partial semi-simplicity conjecture for a smooth projective surface $S$ over a finite field $k$ is the following statement: if $q=\#k$, then
\begin{equation*}\label{SS}\tag{PS}
H^2_{\cris}(S)^{(F_q-q^2)=0}=H^2_{\cris}(S)^{F_q=q}
\end{equation*}
where $H^2_{\cris}(S)=H^2_{\cris}(S/W(k))\otimes\mathbb{Q}$ is the second crystalline cohomology group of $S$ and $F_q$ is the $q$-power Frobenius endomorphism. It is a consequence of Tate's conjecture. For $E(N)_{\mathbb{F}_p}$, it is related to estimates for the absolute value of the Fourier coefficients of normalized newforms of weight 3, see \cite{colemanedixhoven}. Such estimates can be shown to hold if the field of coefficients of the newform is totally real, or for CM forms, but the general case is unknown. In any case, we can show that it holds for all primes $p$ outside of a set of density zero, viz.

\begin{corollary}[Corollary \ref{tatecor}]
The conclusion of Theorem \ref{main} holds for all $p\equiv 1\mod N$ outside of a set of primes of density zero.
\end{corollary}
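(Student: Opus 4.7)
The plan is to reduce the corollary to showing that the partial semi-simplicity conjecture (PS) for $E(N)_{\mathbb{F}_p}$ holds outside of a density zero set of primes $p \equiv 1 \mod N$, after which Theorem \ref{main} applies directly.

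First I would use the decomposition of $H^2_{\cris}(E(N))$ explained in \cite{colemanedixhoven}: modulo the algebraic part, $H^2$ is a direct sum of two-dimensional pieces indexed by the weight $3$ newforms $f$ of level dividing $N$, on each of which Frobenius has characteristic polynomial $T^2 - a_p(f) T + \chi_f(p) p^2$. The condition $p \equiv 1 \mod N$ forces $\chi_f(p) = 1$, so the two eigenvalues coincide and equal $p$ if and only if $a_p(f) = 2p$; in every other situation the eigenvalues are distinct, the piece is automatically semi-simple, and eigenvalue $p$ occurs with multiplicity at most one, so (PS) holds trivially on that block. Thus (PS) for $E(N)_{\mathbb{F}_p}$ reduces to the strict inequality $a_p(f) \ne 2p$ for each of the finitely many newforms $f$ appearing.

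Second, for each such $f$ I would show that the exceptional set $S_f = \{p \equiv 1 \mod N : a_p(f) = 2p\}$ has Dirichlet density zero. If the Hecke coefficient field of $f$ is totally real, then the strict Deligne bound $|a_p(f)| < 2p$ is known at every $p$ coprime to the level, so $S_f = \emptyset$; if $f$ has CM by an imaginary quadratic field $K$, the explicit description of $a_p(f)$ in terms of a Hecke character on $K$ reduces the condition $a_p(f) = 2p$ to a restrictive splitting/congruence condition on $p$ that cuts out a density zero set. In the remaining non-CM case I would invoke the Sato--Tate theorem for weight $3$ newforms (Barnet-Lamb--Geraghty--Harris--Taylor), which asserts that the normalized traces $a_p(f)/p$ are equidistributed on $[-2,2]$ against a continuous measure, so the single value $2$ is attained only on a density zero set.

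Since the union of finitely many density zero sets remains density zero, for all $p \equiv 1 \mod N$ outside this union (PS) holds and Theorem \ref{main} yields the corollary. The main obstacle is the last case: without Sato--Tate one cannot rule out $a_p(f) = 2p$ on a density-one set of primes by purely elementary means, and deploying the equidistribution of normalized Frobenius traces for weight $3$ newforms is what makes the argument go through uniformly.
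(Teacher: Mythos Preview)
Your reduction of (PS) to the condition $a_p(f)\neq 2p$ for finitely many weight~3 newforms is the same as the paper's, but the way you dispose of the exceptional set differs. For the non-CM case you invoke the full Sato--Tate theorem of Barnet-Lamb--Geraghty--Harris--Taylor, whereas the paper gets by with the much softer density statement of Serre \cite{serrecebotarev}: for a non-CM form the set of primes with $a_p=(\zeta+\zeta')p$ (any fixed pair of roots of unity) has density zero. Serre's result predates Sato--Tate by three decades and requires only the $\ell$-adic image-of-Galois machinery, so the paper's argument is considerably lighter; on the other hand your appeal to equidistribution is conceptually cleaner and gives quantitative information the paper does not need. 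In the CM case the paper is sharper than your sketch: rather than bounding the density of $S_f$, it notes (via Ribet) that $\varphi^2$ is outright diagonalisable, so (PS) holds at \emph{every} prime and no exclusion is needed. Finally, one bookkeeping point: the cohomology of $E(N)$ is governed by $\Gamma(N)$, and the paper passes through the covering $Y_1(N^2)\to Y(N)$ to land in the standard $\Gamma_1$-newform theory; your phrase ``level dividing $N$'' should accordingly read ``level dividing $N^2$'' (for $\Gamma_1$), or else you should say explicitly that Coleman--Edixhoven's decomposition already handles the $\Gamma(N)$ case.
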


For arbitrary $p$, we can also show Tate's conjecture under a completely different assumption, namely that the formal Brauer group of $E(N)_{\mathbb{F}_p}$ is a formal Lie group of finite height. In fact, writing $T_p\,A=\varprojlim_nA[p^n]$ where $A[m]\subset A$ is the subgroup of $m$-torsion ($m\in\mathbb{Z}$) for any abelian group $A$, we have

\begin{theorem}[Corollary \ref{fhcor}]\label{fh}
If the formal Brauer group of a connected component of $E(N)_{\bar{\mathbb{F}}_p}$ has finite height, then $T_p\Br(E(N)_{\bar{\mathbb{F}}_p})=0$.
\end{theorem}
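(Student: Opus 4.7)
My plan is to reduce the theorem via the flat Kummer-theoretic exact sequence
\[0 \to NS(X) \otimes \mathbb{Z}_p \to H^2_{\mathrm{fl}}(X, \mathbb{Z}_p(1)) \to T_p\Br(X) \to 0,\]
where $X$ is a connected component of $E(N)_{\bar{\mathbb{F}}_p}$, to showing that the cycle class map on the left is surjective. Since $T_p\Br(X) \cong \Hom(\mathbb{Q}_p/\mathbb{Z}_p, \Br(X))$ is $\mathbb{Z}_p$-torsion-free, it is enough to prove surjectivity after inverting $p$. Combining Milne's computation of flat cohomology of $\mu_{p^n}$ with Illusie's identification of logarithmic Hodge--Witt cohomology gives an isomorphism $H^2_{\mathrm{fl}}(X, \mathbb{Z}_p(1)) \otimes_{\mathbb{Z}_p} K \cong H^2_{\cris}(X/W)_K^{F=p}$, so the problem reduces to proving
\[NS(X) \otimes \mathbb{Q}_p \twoheadrightarrow H^2_{\cris}(X/W)_K^{F=p}.\]

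The crucial input is the finite-height hypothesis. By Artin--Mazur, $H^2(X, W\mathcal{O}_X)$ is the (contravariant) Dieudonn\'e module of $\widehat{\Br}(X)$, so finite height forces it to be a finitely generated $W$-module on which Frobenius acts with slopes in $[0,1)$; by Ekedahl duality, the symmetric piece $H^0(X, W\Omega^2_X)$ has slopes in $(1, 2]$. Since the slope spectral sequence degenerates after inverting $p$, $H^2_{\cris}(X)_K$ splits as $H^2(W\mathcal{O}_X)_K \oplus H^1(W\Omega^1_X)_K \oplus H^0(W\Omega^2_X)_K$, and because $v_p(p) = 1$ the $F = p$ eigenspace lies entirely in the middle summand. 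To pin it down I would invoke the Hecke decomposition of $H^2_{\cris}(E(N))_K$ into an Eisenstein part, spanned by the zero section, the $N$-torsion sections, and the components of the singular fibres, together with two-dimensional isotypic pieces attached to cuspidal weight-$3$ newforms $f$ on $\Gamma(N)$. On each such cuspidal piece the Frobenius eigenvalues $\alpha_f, \beta_f$ satisfy $\alpha_f\beta_f = p^2$ by Eichler--Shimura--Deligne, and the finite-height hypothesis excludes the Newton slope pair $(1,1)$, so $\alpha_f \neq p \neq \beta_f$. Hence no cuspidal piece contributes to $H^2_{\cris}(X)_K^{F=p}$.

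Consequently $H^2_{\cris}(X/W)_K^{F=p}$ is contained in the Eisenstein part, which is manifestly algebraic, and the required surjectivity -- and hence $T_p\Br(X) = 0$ -- follows. The main obstacle I expect is the second step: aligning the slope decomposition of $H^2_{\cris}$ with the Hecke-isotypic decomposition precisely enough to conclude that the global finite-height assumption on $\widehat{\Br}(X)$ rules out Frobenius slope $1$ on every cuspidal isotypic piece. This will rest on the Hecke-equivariance of crystalline Frobenius and the Hodge-theoretic identification of $H^2(X, W\mathcal{O}_X)$, Hecke-equivariantly, with a space of weight-$3$ cusp forms on $\Gamma(N)$.
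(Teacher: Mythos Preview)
Your route is quite different from the paper's, which never touches slopes, the de Rham--Witt complex, or the Hecke decomposition. The paper argues in mixed characteristic: all of Section~3 is devoted to showing (Corollary~\ref{subquot}) that when $\Phi^2$ has finite height, $V_p\Br(E_{\bar{k}})$ is a $G_{K_0}$-subquotient of $V_p\Br(E_{\bar{K}})$. Since the latter is Hodge--Tate of weights $\pm1$ (Corollary~\ref{HT}) and subquotients inherit Hodge--Tate weights, $V_p\Br(E_{\bar{k}})$ is Hodge--Tate with weights in $\{\pm1\}$; but it is also unramified, hence zero. This argument is Hecke-free and applies to any smooth proper surface over $W$ satisfying (\ref{hodgetateK}).

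The obstacle you flag is real, and your proposed resolution is not quite right. You suggest identifying $H^2(X,W\mathcal{O}_X)$ Hecke-equivariantly with the space of weight-$3$ cusp forms, but $H^2(X,W\mathcal{O}_X)$ has $W$-rank equal to the \emph{height} $h$ of $\widehat{\Br}$, while the cusp-form space has dimension $g=h^{0,2}=\dim\widehat{\Br}$; asserting such an identification presupposes $h=g$, which is exactly what must be shown. The honest argument runs as follows. Each two-dimensional cuspidal piece has Newton slopes $(\lambda_f,2-\lambda_f)$ with $\lambda_f\in[0,1]$, hence contributes at most one dimension to the slope-$<1$ part of $H^2_{\cris}$. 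The Eisenstein part is spanned by algebraic cycles---this is Corollary~\ref{parabolic}, itself a consequence of (\ref{hodgetateK})---so it has pure slope $1$, and therefore the entire slope-$<1$ part lies in the cuspidal part, giving $h\le g$. On the other hand $g\le h$ because the dimension of a $p$-divisible group never exceeds its height. Hence $h=g$, every $\lambda_f<1$, and no cuspidal piece meets $H^2_{\cris}(X)^{F=p}$. So both proofs ultimately rest on (\ref{hodgetateK}); yours stays in characteristic $p$ and is arguably more direct once $h=g$ is in hand, but is specific to $E(N)$, whereas the paper's buys generality at the cost of the Brauer-group comparison in Section~3.
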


However, it seems to be a difficult arithmetic problem to determine for which $p$ the reduction of $E(N)$ has finite height formal Brauer group. Shioda \cite{shioda1, shioda2} has shown that $E(4)_{\bar{\mathbb{F}}_p}$ is a K3 surface whose formal Brauer group is of finite height if and only if $p\equiv 1\mod 4$, but we do not know what happens for $N>4$.

The starting point for the proofs of both Theorems \ref{main} and \ref{fh} is the following fact:
\begin{equation*}\label{hodgetateK}\tag{HT}
V_p\Br(E(N)_{\bar{\mathbb{Q}}})\;\text{is a Hodge-Tate representation with weights}\pm 1.
\end{equation*}
Here $\Br(-):=H^2(-,\mathbb{G}_m)$ denotes the cohomological Brauer group and for any abelian group $A$ we write $V_pA:=T_pA\otimes\mathbb{Q}$.

We can summarize our method of proof of Theorem \ref{main} as follows. Let $I_p$ be the extension to $E(N)$ of the morphism of $X(N)$ obtained by multiplying the level structure by $p\in(\mathbb{Z}/N)^*$ and let $U\subset V_p\Br(E(N)_{\bar{\mathbb{Q}}})$ be the subset on which $I_p$ acts trivially. Then (modulo (\ref{SS})) (\ref{hodgetateK}) and the Eichler-Shimura congruence relation imply $D_{\cris}(U)^{\varphi=1}=0$, where $\varphi$ is the Frobenius. For $p\equiv 1\mod N$, $I_p$ is the identity and the theorem follows. In the case $p\not\equiv 1\mod N$ we only know Shioda's result \cite{shioda2} for $N=4$.

The proof of Theorem \ref{fh} is much more straightforward: it makes no use of Hecke operators and applies more generally to any proper smooth surface over a finite extension of $\mathbb{Z}_p$ satisfying (\ref{hodgetateK}) and whose formal Brauer group has finite height (cf. Corollary \ref{subquot}), thereby establishing a relationship between (\ref{hodgetateK}) and Tate's conjecture in this case.

\subsection*{Notation}
All sheaves and cohomology will be for the \'etale topology, except where otherwise stated.

We denote by $k$ a perfect field of characteristic $p>0$, $W=W(k)$ its ring of Witt vectors, $K_0=W[1/p]$, $\bar{k}$ an algebraic closure of $k$, $\bar{K}$ be an algebraic closure of $K_0$, $G_{K_0}=\Gal(\bar{K}/K_0)$, $\hat{\bar{K}}$ the completion of $\bar{K}$ for the $p$-adic norm.

\section{A general result}

We assume familiarity with Fontaine's basic theory \cite{font, font2, colfont}.

\subsection{Autodual crystalline representations}
Let $V$ be a $p$-adic representation of $G_{K_0}$. We say that $V$ is \emph{autodual} if it is isomorphic to its dual, i.e. it has a non-degenerate bilinear form
\[ V\otimes_{\mathbb{Q}_p}V\to\mathbb{Q}_p \]
which is a homomorphism of $G_{K_0}$-modules.

\begin{proposition}\label{key}
Let $V$ be an autodual crystalline representation of $G_{K_0}$ and let $D:=D_{\cris}(V)$ be the associated filtered $\varphi$-module. Suppose the endomorphism $T:=\varphi+\varphi^{-1}$ of $D$ satisfies $T(F^1D)\subset F^1D$. If $D^{(\varphi-1)^2=0}=D^{\varphi=1}$ and $V^{G_{K_0}}=0$, then $D^{\varphi=1}=0$.
\end{proposition}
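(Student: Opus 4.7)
The plan is to argue by contradiction: suppose there exists a nonzero $x \in D^{\varphi=1}$. First, Fontaine's fundamental exact sequence $0 \to \mathbb{Q}_p \to B_{\cris}^{\varphi=1} \to B_{\dR}/B_{\dR}^+ \to 0$, tensored with $V$ and then taken $G_{K_0}$-invariants, identifies $V^{G_{K_0}}$ with $D^{\varphi=1} \cap F^0 D$; so the hypothesis $V^{G_{K_0}} = 0$ forces $x \notin F^0 D$. The autoduality of $V$ provides a perfect $\sigma$-compatible pairing $\langle \cdot,\cdot\rangle : D \otimes_{K_0} D \to K_0$ satisfying $(F^i D)^\perp = F^{1-i} D$, so $F^0 D = (F^1 D)^\perp$, and we obtain $y \in F^1 D$ with $a := \langle x, y\rangle \neq 0$. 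The key normalization is to replace $y$ by $a^{-1} y$, which still lies in $F^1 D$ by $K_0$-stability, so that $\langle x, y\rangle = 1 \in \mathbb{Q}_p$.

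The central computation is that $\langle x, T^n y\rangle = 2^n$ for every $n \geq 0$. Using $\varphi x = x = \varphi^{-1} x$ together with the $\sigma$-compatibility of the pairing gives $\langle x, \varphi^{\pm 1} w\rangle = \sigma^{\pm 1}\langle x, w\rangle$, hence $\langle x, Tw\rangle = (\sigma + \sigma^{-1})\langle x, w\rangle$; since $2^n \in \mathbb{Q}_p$ is fixed by $\sigma$, induction yields the claim. Without the normalization one would only obtain $\langle x, T^n y\rangle = (\sigma + \sigma^{-1})^n(a)$, which fails to collapse to a useful scalar when $k \neq \mathbb{F}_p$; this is why forcing the pairing value into $\mathbb{Q}_p$ at the outset is crucial.

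Next, observe that $T$ is a $\mathbb{Q}_p$-linear endomorphism of $F^1 D$, since $T(\lambda v) = \sigma(\lambda)\varphi v + \sigma^{-1}(\lambda)\varphi^{-1}v = \lambda T v$ for $\lambda \in \mathbb{Q}_p$, and $F^1 D$ is finite-dimensional over $\mathbb{Q}_p$. Let $P \in \mathbb{Q}_p[t]$ be the minimal polynomial of $T|_{F^1 D}$. Then $P(T)y = 0$, and pairing with $x$ gives $0 = \sum_i p_i \langle x, T^i y\rangle = P(2)$. On the other hand $P(2) \neq 0$: if $2$ were a root of $P$, then, since $2 \in \mathbb{Q}_p$, there would exist a nonzero $w \in F^1 D$ with $Tw = 2w$, equivalently $(\varphi-1)^2 w = 0$; the hypothesis $D^{(\varphi-1)^2=0} = D^{\varphi=1}$ then forces $\varphi w = w$, so $w \in D^{\varphi=1} \cap F^1 D \subseteq D^{\varphi=1} \cap F^0 D = V^{G_{K_0}} = 0$, a contradiction.

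The main obstacle is handling the $\sigma$-semilinearity of $\varphi$, which makes $T$ merely $\mathbb{Q}_p$-linear and not $K_0$-linear; this is what rules out any naive a posteriori rescaling of a polynomial identity over $K_0$. The rescaling trick at the very start---forcing $\langle x, y\rangle = 1 \in \mathbb{Q}_p$ before applying $T$---circumvents this difficulty, after which everything reduces to the essentially elementary observation that $2$ cannot be an eigenvalue of $T$ on $F^1 D$.
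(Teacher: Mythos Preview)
Your argument is correct and follows essentially the same route as the paper: pair a hypothetical nonzero $x\in D^{\varphi=1}$ with some $y\in F^1D$, normalize so that $\langle x,y\rangle\in\mathbb{Q}_p$, then use a polynomial relation $P(T)y=0$ together with the equivalence $Tw=2w\Leftrightarrow(\varphi-1)^2w=0$ to manufacture a nonzero element of $F^1D\cap D^{\varphi=1}\subset V^{G_{K_0}}$. The only cosmetic difference is that you take $P$ to be the minimal polynomial of $T|_{F^1D}$ over $\mathbb{Q}_p$ and extract an eigenvector, whereas the paper chooses any $P\in W[t]$ with $P(T)y=0$, factors $P(t)=(t-2)^eQ(t)$, and works with $z:=Q(T)y$ directly.
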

\begin{proof}
The bilinear form on $V$ induces a non-degenerate bilinear form $\cdot$ on $D$. Endow $D^{\varphi=1}$ with the induced filtration from $D$. Since $V$ is crystalline we have $F^0D^{\varphi=1}=V^{G_{K_0}}=0$. If $D^{\varphi=1}=F^0D^{\varphi=1}$, then we are done. If not, then there is $i<0$ and $x\in F^iD^{\varphi=1}\setminus F^{i+1}D^{\varphi=1}$. Since $V$ is autodual, the map $c:D\to D^*:=\Hom_{K_0}(D,K_0)$ induced by $\cdot$ is an isomorphism of filtered $\varphi$-modules, so we have $x^*:=c(x)\in F^iD^*\setminus F^{i+1}D^*$. Note that $x^*$ is the map $D\ni y\mapsto x\cdot y\in K_0$. Since by definition
\[ F^iD^*=\{f\in D^*:f(F^jD)\subset F^{j+i}K_0\;\forall j\in\mathbb{Z}\} \]
the condition $x^*\notin F^{i+1}D^*$ means that there is $j$ such that $x^*(F^jD)\not\subset F^{j+i+1}K_0$, where $K_0$ has the trivial filtration, i.e.
\[ F^kK_0=
\begin{cases}
K_0 & k\leq 0 \\
0 & k>0.
\end{cases} \]
If $x^*(F^jD)\not\subset F^{j+i+1}K_0$, then we must have $x^*(F^jD)\neq 0$, hence $x^*(F^jD)=K_0$. So to say that $x^*(F^jD)\not\subset F^{j+i+1}K_0$ but $x^*(F^jD)\subset F^{i+j}K_0$ is equivalent to the condition $j+i=0$. Hence $j=-i>0$, and there is an element $y\in F^1D$ such that $x\cdot y\neq 0$.

Now, up to dividing $y$ by $x\cdot y$ we may assume that $x\cdot y\in\mathbb{Q}_p$. Let $P(t)\in W[t]$ be such that $P(T)y=0$. Since $\varphi(x)=x$ we have $x\cdot T(d)=T(x\cdot d)$ for all $d\in D$, hence
\[ 0=x\cdot P(T)y=P(T)(x\cdot y)=(x\cdot y)P(2). \]
So $P(2)=0$ and we deduce that $P(t)=(t-2)^eQ(t)$ for some $e\in\mathbb{N}$ and some polynomial $Q(t)$ not divisible by $t-2$. Let $z:=Q(T)y$. Note that $x\cdot z=(x\cdot y)Q(2)\neq 0$. Multiplying the equation $(T-2)^ez=0$ by $\varphi^e$ we find $(\varphi-1)^{2e}z=0$, hence $\varphi(z)=z$ since $D^{(\varphi-1)^2=0}=D^{\varphi=1}$. As $F^1D$ is stable under $T$ by assumption, we have $z\in F^1D$. Thus, $z\in F^1D^{\varphi=1}\subset V^{G_{K_0}}=0$, hence $x=0$, and this completes the proof.
\end{proof}

\begin{remark}
The above argument no longer works if one replaces $\varphi$ by a power $\varphi^r$. The problem is related to the fact that, unlike the case $r=1$, for $r>1$ we may have $F^1B_{\cris}^{\varphi^r=1}\neq 0$.
\end{remark}

\subsection{Application to surfaces}\label{mainsection}
Let $E\to\Spec(W)$ be a smooth projective morphism with geometrically connected fibres of dimension 2. Let $K_0^{\ur}$ be the maximal unramified extension of $K_0$ in $\bar{K}$. The Kummer sequence gives an exact sequence of $G_{K_0}$-representations
\[ 0\to NS(E_{\bar{K}})\otimes\mathbb{Q}_p\to H^2_{\et}(E_{\bar{K}},\mathbb{Q}_p)(1)\to V_p\Br(E_{\bar{K}})\to 0 \]
where $NS:=\Pic/\Pic^0$ is the N\'eron-Severi group. By $p$-adic Hodge theory, applying the functor $D_{\cris}$ we get an exact sequence
\[ 0\to D_{\cris}(NS(E_{\bar{K}})_{\mathbb{Q}_p})\to H^2_{\cris}(E_k/K_0)[1]\to D_{\cris}(V_p\Br(E_{\bar{K}}))\to 0 \]
where for a filtered $\varphi$-module $D$ we denote $D[1]$ the filtered $\varphi$-module whose underlying $K_0$-module is $D$ with $\varphi_{D[1]}:=p^{-1}\varphi_D$ and $F^iD[1]:=F^{i+1}D$. On the other hand, there is the specialization map
\[ \spe:NS(E_{\bar{K}})\to NS(E_{\bar{k}}) \]
(\cite[Exp. E, appendix, 7.12]{sga6}) which is functorial in $E$ (hence $G_{K_0}$-equivariant) and injective up to torsion (since the N\'eron-Severi group is finitely generated this can be checked using $l$-adic cohomology). So $NS(E_{\bar{K}})\otimes\mathbb{Q}_p$ is an unramified discrete representation of $G_{K_0}$ hence is $K_0^{\ur}$-admissible. Thus, $D_{\cris}(NS(E_{\bar{K}})_{\mathbb{Q}_p})=\left(NS(E_{\bar{K}})\otimes K_0^{\ur}\right)^{\Gal(K_0^{\ur}/K_0)}$, and $D_{\cris}(NS(E_{\bar{k}})_{\mathbb{Q}_p})=\left(NS(E_{\bar{k}})\otimes K_0^{\ur}\right)^{\Gal(K_0^{\ur}/K_0)}$ and we have a commutative diagram
\[ \xymatrix{
D_{\cris}(NS(E_{\bar{K}})_{\mathbb{Q}_p}) \ar[r]^{\spe} \ar[d] & D_{\cris}(NS(E_{\bar{k}})_{\mathbb{Q}_p}) \ar[d]^{c_1} \\
H^2_{\cris}(E_k/K_0)[1] \ar@{=}[r] & H^2_{\cris}(E_k/K_0)[1]
} \]
where $c_1$ is the first Chern class. In fact, $c_1$ is injective since
\[ NS(E_{\bar{k}})_{\mathbb{Q}_p}\subset \left(H^2_{\cris}(E_k/K_0)[1]\otimes_{K_0}K^{\ur}_0\right)^{\varphi=1}. \]
Therefore, defining $C:=H^2_{\cris}(E_k/K_0)[1]/D_{\cris}(NS(E_{\bar{k}})_{\mathbb{Q}_p})$, we have a commutative diagram with exact rows
\[ \xymatrix{
0 \ar[r] & D_{\cris}(NS(E_{\bar{K}})_{\mathbb{Q}_p}) \ar[r] \ar[d] & H^2_{\cris}(E_k/K_0)[1] \ar[r] \ar@{=}[d] & D_{\cris}(V_p\Br(E_{\bar{K}})) \ar[r] \ar[d] & 0 \\
0 \ar[r] & D_{\cris}(NS(E_{\bar{k}})_{\mathbb{Q}_p}) \ar[r] & H^2_{\cris}(E_k/K_0)[1]\ \ar[r] & C \ar[r] & 0
} \]
and so setting $M:=D_{\cris}(NS(E_{\bar{k}})_{\mathbb{Q}_p})/D_{\cris}(NS(E_{\bar{K}})_{\mathbb{Q}_p})$ we deduce an exact sequence of $\varphi$-modules
\[ 0\to M\to D_{\cris}(V_p\Br(E_{\bar{K}}))\to C\to 0. \]

\begin{theorem}\label{mainth}
Let $D:=D_{\cris}(V_p\Br(E_{\bar{K}}))$ and $T:=\varphi+\varphi^{-1}$. If $D^{(\varphi-1)^2=0}=D^{\varphi=1}$, $T(F^1D)\subset F^1D$ and $V_p\Br(E_{\bar{K}})^{G_{K_0}}=0$, then $M^{\varphi=1}=0=C^{\varphi=1}$.
\end{theorem}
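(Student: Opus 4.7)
The plan is to apply Proposition \ref{key} to $V := V_p\Br(E_{\bar{K}})$ to obtain $D^{\varphi=1}=0$, and then to deduce both $M^{\varphi=1}=0$ and $C^{\varphi=1}=0$ by exploiting Poincar\'e duality on both the generic and the special fibre.

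First, to check the missing hypothesis of Proposition \ref{key}, I would verify that $V$ is autodual as a $G_{K_0}$-representation. Poincar\'e duality endows $H^2_{\et}(E_{\bar{K}},\mathbb{Q}_p)(1)$ with a $G_{K_0}$-equivariant non-degenerate cup-product pairing whose restriction to $NS(E_{\bar{K}})\otimes\mathbb{Q}_p$ is the intersection pairing. Over the algebraically closed field $\bar{K}$ this pairing is non-degenerate (N\'eron--Severi agrees with numerical equivalence modulo torsion for divisors on smooth projective surfaces), so there is a Galois-equivariant orthogonal splitting
\[ H^2_{\et}(E_{\bar{K}},\mathbb{Q}_p)(1) = \bigl(NS(E_{\bar{K}})\otimes\mathbb{Q}_p\bigr) \oplus \bigl(NS(E_{\bar{K}})\otimes\mathbb{Q}_p\bigr)^\perp \]
identifying $V$ with the second summand. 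The remaining hypotheses of Proposition \ref{key} are precisely those assumed in Theorem \ref{mainth}, so we conclude $D^{\varphi=1}=0$, whence $M^{\varphi=1}\subseteq D^{\varphi=1}=0$ since $M$ injects into $D$.

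The vanishing $C^{\varphi=1}=0$ is more delicate, because $C$ is a quotient of $D$ and $(-)^{\varphi=1}$ is only left exact. The idea is to show that the exact sequence $0\to M\to D\to C\to 0$ in fact splits as $\varphi$-modules, using the analogous orthogonal decomposition on the closed fibre. Poincar\'e duality endows $H^2_{\cris}(E_k/K_0)[1]$ with an autodual filtered $\varphi$-module structure compatible with the $p$-adic comparison, and the intersection pairing on $NS(E_{\bar{k}})$ is again non-degenerate modulo torsion, so $D_{\cris}(NS(E_{\bar{k}}))$ admits a $\varphi$-stable orthogonal complement naturally isomorphic to $C$. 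From the inclusion $D_{\cris}(NS(E_{\bar{K}}))\subset D_{\cris}(NS(E_{\bar{k}}))$ we obtain the reverse inclusion of orthogonal complements
\[ C \;\cong\; D_{\cris}(NS(E_{\bar{k}}))^\perp \;\subset\; D_{\cris}(NS(E_{\bar{K}}))^\perp \;\cong\; D, \]
exhibiting $C$ as a sub-$\varphi$-module of $D$ (and producing a section of $D\twoheadrightarrow C$). Hence $C^{\varphi=1}\subseteq D^{\varphi=1}=0$.

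The main obstacle is exactly this second step: while $M^{\varphi=1}=0$ is formal once Proposition \ref{key} has been applied, the vanishing of $C^{\varphi=1}$ is not automatic from $D^{\varphi=1}=0$ and one must recognise that the two Poincar\'e-duality splittings (generic and special fibre) combine to give a $\varphi$-equivariant section of $D\twoheadrightarrow C$. The technical input to be verified is the Galois- (resp. $\varphi$-) equivariance of all the pairings and identifications involved, together with the non-degeneracy of the intersection pairings on the two N\'eron--Severi groups.
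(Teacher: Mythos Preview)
Your proposal is correct and follows essentially the same route as the paper: one verifies autoduality of $V_p\Br(E_{\bar K})$ via Poincar\'e duality and non-degeneracy of the intersection pairing on $NS(E_{\bar K})_{\mathbb{Q}_p}$, applies Proposition~\ref{key} to obtain $D^{\varphi=1}=0$, and then uses non-degeneracy of the intersection pairing on $NS(E_{\bar k})_{\mathbb{Q}_p}$ as well to realise $C$ as a sub-$\varphi$-module of $D$ (the paper phrases this as $C\cong M^\perp$ inside $D$, which is the same splitting you describe via orthogonal complements in the ambient $H^2_{\cris}(E_k/K_0)[1]$).
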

\begin{proof}
By Poincar\'e duality, cup product is non-degenerate on $H^2_{\et}(E_{\bar{K}},\mathbb{Q}_p)(1)$, and since numerical and algebraic equivalence coincide up to torsion for divisors (\cite[9.6.17]{kleiman}), it is also non-degenerate on $NS(E_{\bar{K}})_{\mathbb{Q}_p}$. It follows that $V_p\Br(E_{\bar{K}})\cong\left(NS(E_{\bar{K}})_{\mathbb{Q}_p}\right)^{\bot}$ has a non-degenerate symmetric bilinear form we may apply Proposition \ref{key} to obtain $D^{\varphi=1}=0$. Moreover, the restriction of this form to $M$ is non-degenerate since cup product is non-degenerate on both $D_{\cris}(NS(E_{\bar{K}})_{\mathbb{Q}_p})$ and $D_{\cris}(NS(E_{\bar{k}})_{\mathbb{Q}_p})$. Thus, $C\cong M^{\bot}$ and hence $C^{\varphi=1}=0$.
\end{proof}

\begin{corollary}\label{maincor}
Under the assumptions of Theorem \ref{mainth}, Tate's conjecture holds for $E_k$ and we have
\[ NS(E_{\bar{K}})^{G_{K_0}}\otimes\mathbb{Q}=NS(E_{\bar{k}})^{G_{K_0}}\otimes\mathbb{Q}. \]
\end{corollary}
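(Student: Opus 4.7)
My plan is to derive the two assertions from the vanishings $M^{\varphi=1}=0$ and $C^{\varphi=1}=0$ of Theorem~\ref{mainth}, in each case by applying the left-exact functor $(-)^{\varphi=1}$ to an appropriate short exact sequence from subsection~\ref{mainsection}.

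For the second assertion, I would apply $(-)^{\varphi=1}$ to
\[ 0 \to D_{\cris}(NS(E_{\bar{K}})_{\mathbb{Q}_p}) \to D_{\cris}(NS(E_{\bar{k}})_{\mathbb{Q}_p}) \to M \to 0. \]
Left-exactness combined with $M^{\varphi=1}=0$ forces the equality $D_{\cris}(NS(E_{\bar{K}})_{\mathbb{Q}_p})^{\varphi=1}=D_{\cris}(NS(E_{\bar{k}})_{\mathbb{Q}_p})^{\varphi=1}$. Since both representations are unramified (as noted in \ref{mainsection}), a direct calculation starting from $D_{\cris}(V)=(V\otimes K_0^{\ur})^{G_{K_0}}$ identifies $D_{\cris}(V)^{\varphi=1}$ with $V^{G_{K_0}}$ for each: $\varphi$-fixed elements in $V\otimes K_0^{\ur}$ lie in $V\otimes\mathbb{Q}_p=V$, and the $G_{K_0}$-invariance condition then reads off $V^{G_{K_0}}$. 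We conclude $NS(E_{\bar{K}})^{G_{K_0}}\otimes\mathbb{Q}_p=NS(E_{\bar{k}})^{G_{K_0}}\otimes\mathbb{Q}_p$, and faithful flatness of $\mathbb{Q}_p/\mathbb{Q}$ descends this to the claimed equality over $\mathbb{Q}$.

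For Tate's conjecture, I would run the same argument on the second row of the big commutative diagram,
\[ 0\to D_{\cris}(NS(E_{\bar{k}})_{\mathbb{Q}_p})\to H^2_{\cris}(E_k/K_0)[1]\to C\to 0. \]
Left-exactness and $C^{\varphi=1}=0$ give $D_{\cris}(NS(E_{\bar{k}})_{\mathbb{Q}_p})^{\varphi=1}=H^2_{\cris}(E_k/K_0)[1]^{\varphi=1}$; combined with the identification above, this says that the cycle class map $c_1$ sends $NS(E_{\bar{k}})^{G_{K_0}}\otimes\mathbb{Q}_p$ surjectively onto the Frobenius-fixed subspace of crystalline cohomology. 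The main point requiring care is the translation of this crystalline statement into the standard form of Tate's conjecture for $E_k$: for $k=\mathbb{F}_p$ (the main case of interest) this is immediate since $K_0=\mathbb{Q}_p$ and $\varphi$ is $\mathbb{Q}_p$-linear, and in general it rests on the Katz--Messing compatibility of characteristic polynomials of Frobenius on crystalline and $\ell$-adic cohomology together with the partial semi-simplicity hypothesis (\ref{SS}) to identify generalized eigenspaces with ordinary eigenspaces.
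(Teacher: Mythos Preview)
Your argument for the second assertion is exactly the paper's: apply $(-)^{\varphi=1}$ to the short exact sequence defining $M$, use $M^{\varphi=1}=0$, and identify $D_{\cris}(V)^{\varphi=1}=V^{G_{K_0}}$ for unramified $V$. For Tate's conjecture the paper simply cites \cite{milneAT} for the equivalence with $C^{\varphi=1}=0$, whereas you unpack this by hand via the second exact sequence; the two approaches amount to the same thing.

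One small correction: your appeal to (\ref{SS}) in the general case is both unnecessary and not among the hypotheses of Theorem~\ref{mainth}. No semi-simplicity is needed to pass from $C^{\varphi=1}=0$ to Tate's conjecture. For any $\varphi$-module $D$ over $K_0$ with $[k:\mathbb{F}_p]=n$, the $K_0$-linear operator $\varphi^n$ restricts to the identity on the $K_0$-subspace $D^{\varphi^n=1}$, and Galois descent (Hilbert~90 for the cyclic extension $K_0/\mathbb{Q}_p$) applied to the semilinear automorphism $\varphi$ of this subspace yields $D^{\varphi=1}\otimes_{\mathbb{Q}_p}K_0=D^{\varphi^n=1}$. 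Hence $C^{\varphi=1}=0$ is equivalent to $C^{F_q=1}=0$, and the latter is directly one of Milne's equivalent crystalline formulations of Tate's conjecture---no comparison of generalized versus ordinary eigenspaces is required.
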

\begin{proof}
Tate's conjecture is well-known to be equivalent to the statement $C^{\varphi=1}=0$ (\cite{milneAT}). For the second statement, note that we have an exact sequence
\[ 0\to D_{\cris}(NS(E_{\bar{K}})_{\mathbb{Q}_p})^{\varphi=1}\to D_{\cris}(NS(E_{\bar{k}})_{\mathbb{Q}_p})^{\varphi=1}\to M^{\varphi=1} \]
so since $M^{\varphi=1}=0$ we have
\[ \left(NS(E_{\bar{K}})\otimes\mathbb{Q}_p\right)^{G_{K_0}}=D_{\cris}(NS(E_{\bar{K}})_{\mathbb{Q}_p})^{\varphi=1}=D_{\cris}(NS(E_{\bar{k}})_{\mathbb{Q}_p})^{\varphi=1}=\left(NS(E_{\bar{k}})\otimes\mathbb{Q}_p\right)^{G_{K_0}} \]
as claimed.
\end{proof}

\section{Elliptic modular surfaces}
We fix throughout an integer $N$ and a prime number $p$ which does not divide $N$. 
\subsection{Definition} For $N\geq 3$, let $Y(N)$ to be moduli $\mathbb{Z}[1/N]$-scheme of elliptic curves with level $N$ structure and let $X(N)$ be its modular compatification. $X(N)$ classifies generalized elliptic curves with level $N$ structure whose singular fibres are N\'eron $N$-gons. $X(N)$ is smooth over $\mathbb{Z}[1/N]$ and the normalization of $\mathbb{Z}[1/N]$ in $X(N)$ is $\mathbb{Z}[\zeta_N,1/N]$, where $\zeta_N$ is a primitive $N$th root of unity. See \cite{delrap} for details. We denote the universal generalized elliptic curve by
\[ g:E(N)\to X(N). \]
$E(N)$ is the \emph{elliptic modular surface of level $N$} studied in \cite{shioda1}. That it is smooth over $\mathbb{Z}[1/N]$ follows from the results of \cite[VII]{delrap}.

\subsection{Application of Hodge theory}
Assume $\zeta_N\in W$ (note that this is always possible if $p\equiv 1\mod N$, for then $\zeta_N^p=\zeta_N$, so $\zeta_N\in\mathbb{Z}_p$). To simplify the notation write
\[ E:=E(N)\otimes_{\mathbb{Z}[\zeta_N]}W,\,X:=X(N)\otimes_{\mathbb{Z}[\zeta_N]}W,\,Y:=Y(N)\otimes_{\mathbb{Z}[\zeta_N]}W,\,\Sigma:=X\setminus Y. \]
Let $L$ be the conormal sheaf of the zero section of $g:E\to X$, and let $\omega=\Omega^1_{X}(\log\Sigma)$ denote the line bundle of differential forms on $X$ with logarithmic poles along $\Sigma$.

\begin{theorem}[Faltings]\label{faltings}
There are $G_{K_0}$-equivariant isomorphisms
\[ H^1(Y_{\bar{K}},R^1g_*\mathbb{Q}_p(1))\otimes_{\mathbb{Q}_p}\hat{\bar{K}}=H^1(X,L^{\otimes -1})\otimes_{W}\hat{\bar{K}}(1)\oplus H^0(X,L\otimes\omega)\otimes_{W}\hat{\bar{K}}(-1) \]
\[ \tilde{H}^1(Y_{\bar{K}},R^1g_*\mathbb{Q}_p(1))\otimes_{\mathbb{Q}_p}\hat{\bar{K}}=H^1(X,L^{\otimes -1})\otimes_{W}\hat{\bar{K}}(1)\oplus H^0(X,L\otimes\Omega^1_X)\otimes_{W}\hat{\bar{K}}(-1) \]
where $\tilde{H}^1:=\im(H^1_c\to H^1)$ is the parabolic cohomology.
\end{theorem}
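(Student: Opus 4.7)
The plan is to apply Faltings' Hodge–Tate decomposition for the \'etale cohomology of a smooth variety with coefficients in a geometric $p$-adic local system. Specialising to the local system $\mathcal{V}:=R^1g_*\mathbb{Q}_p$ on $Y$ and computing graded pieces via the Kodaira–Spencer isomorphism produces the two-term right-hand sides.

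First I would record the de Rham data. The local system $\mathcal{V}$ is the $p$-adic realisation of a variation of Hodge structure of weight $1$; its Deligne canonical extension to a filtered $\mathcal{O}_X$-module $\bar{\mathcal{H}}$ with logarithmic connection $\nabla:\bar{\mathcal{H}}\to\bar{\mathcal{H}}\otimes\omega$ has $F^1\bar{\mathcal{H}}=L$ and $\bar{\mathcal{H}}/F^1\cong L^{-1}$, and the Kodaira–Spencer map induces an isomorphism $L\xrightarrow{\sim}L^{-1}\otimes\omega$ (equivalently $L^{\otimes 2}\cong\omega$).

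Next, Faltings' Hodge–Tate comparison produces a $G_{K_0}$-equivariant decomposition
\[ H^1(Y_{\bar K},\mathcal{V}(1))\otimes_{\mathbb{Q}_p}\hat{\bar K}\;\cong\;\bigoplus_{i\geq 0}\mathbb{H}^1\bigl(X,\gr^i_F\mathrm{DR}(\bar{\mathcal{H}})\bigr)\otimes_W\hat{\bar K}(1-i), \]
where $\mathrm{DR}(\bar{\mathcal{H}})=[\bar{\mathcal{H}}\xrightarrow{\nabla}\bar{\mathcal{H}}\otimes\omega]$ is the log de Rham complex with Hodge filtration induced by Griffiths transversality. The three relevant graded pieces are $\gr^0_F=[L^{-1}\to 0]$, $\gr^1_F=[L\xrightarrow{\mathrm{KS}}L^{-1}\otimes\omega]$, and $\gr^2_F=[0\to L\otimes\omega]$. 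Since Kodaira–Spencer is an isomorphism, the middle piece is acyclic and $\mathbb{H}^1(X,\gr^1_F)=0$, while $\mathbb{H}^1(X,\gr^0_F)=H^1(X,L^{-1})$ and $\mathbb{H}^1(X,\gr^2_F)=H^0(X,L\otimes\omega)$. Combined with the Tate twists, this is the first isomorphism.

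For the parabolic version, the same comparison is applied to $\tilde H^1=\im(H^1_c\to H^1)$; on the de Rham side this replaces the log de Rham complex by the non-logarithmic complex $[\bar{\mathcal{H}}\to\bar{\mathcal{H}}\otimes\Omega^1_X]$, so $\omega$ is replaced by $\Omega^1_X$ in the $\gr^2$ term and $H^0(X,L\otimes\omega)$ becomes the space $H^0(X,L\otimes\Omega^1_X)$ of weight-$3$ cusp forms. The main obstacle is Faltings' Hodge–Tate comparison itself for cohomology with non-trivial coefficients on an open base; granted that, the remaining steps reduce to the Kodaira–Spencer computation, which forces the cancellation of $\gr^1$ and produces precisely the two surviving summands.
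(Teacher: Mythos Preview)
Your proposal is correct and matches the paper's approach: the paper simply cites Faltings' $p$-adic Eichler--Shimura isomorphism \cite{famodular}, and what you have written is an unpacking of that result---the general Hodge--Tate comparison with coefficients followed by the Kodaira--Spencer computation that kills the middle graded piece. Your description of the parabolic de Rham model is slightly informal (the connection does not literally land in $\bar{\mathcal{H}}\otimes\Omega^1_X$, so one should phrase the cuspidal complex more carefully), but the graded pieces you obtain are the right ones and the argument is sound.
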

\begin{proof}
This is a special case of the $p$-adic Eichler-Shimura isomorphism of \cite{famodular}.
\end{proof}

Let $I\subset G_{K_0}$ be the inertia group.

\begin{corollary}
If $X_H$ is representable, then $H^1(Y_{\bar{K}},R^1g_*\mathbb{Q}_p(1))$ is a Hodge-Tate representation with weights $\pm 1$. In particular, $H^1(Y_{\bar{K}},R^1g_*\mathbb{Q}_p(1))^{I}=0$.
\end{corollary}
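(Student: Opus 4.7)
The plan is to read off both assertions directly from Faltings' Theorem~\ref{faltings}. Set $V:=H^1(Y_{\bar{K}},R^1g_*\mathbb{Q}_p(1))$. The theorem provides a $G_{K_0}$-equivariant isomorphism
\[ V\otimes_{\mathbb{Q}_p}\hat{\bar{K}}\;\cong\;\bigl(H^1(X,L^{\otimes -1})\otimes_{W}\hat{\bar{K}}(1)\bigr)\oplus\bigl(H^0(X,L\otimes\omega)\otimes_{W}\hat{\bar{K}}(-1)\bigr), \]
and since the two coherent cohomology groups are finite-dimensional $K_0$-vector spaces on which $G_{K_0}$ acts trivially, the right-hand side is already in the shape of a Hodge--Tate decomposition: after regrouping the trivial factors, $V\otimes_{\mathbb{Q}_p}\hat{\bar{K}}$ is a direct sum of copies of $\hat{\bar{K}}(1)$ and $\hat{\bar{K}}(-1)$. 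By definition this says $V$ is Hodge--Tate with weights in $\{-1,+1\}$.

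For the vanishing of inertia invariants I would invoke the standard consequence of Ax--Sen--Tate that $\hat{\bar{K}}(n)^{I}=0$ for $n\neq 0$: the cyclotomic character restricted to $I$ still has infinite image, so the usual computation of $(\mathbb{C}_p(n))^{H}$ for closed subgroups $H\subset G_{K_0}$ yields this vanishing. If $v\in V^{I}$ were nonzero, then $v\otimes 1\in V\otimes_{\mathbb{Q}_p}\hat{\bar{K}}$ would be a nonzero $I$-invariant; projecting onto the two summands of the displayed decomposition produces $I$-invariants in finite direct sums of $\hat{\bar{K}}(\pm 1)$, forcing $v\otimes 1=0$ and hence $v=0$.

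There is no serious obstacle. The only role of the representability hypothesis on $X_H$ is to guarantee the existence of a universal generalized elliptic curve, so that Faltings' $p$-adic Eichler--Shimura isomorphism of \cite{famodular} applies directly; once this is granted, the Hodge--Tate assertion is immediate and the vanishing of $V^I$ is a formal consequence of the Tate--Sen formalism. The one minor point worth verifying is the twist convention, so that the decomposition really has weights $\pm 1$ and not, say, $0$ and $2$; but this is visible from the explicit form of the two summands appearing in Theorem~\ref{faltings}.
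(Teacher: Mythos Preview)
Your proof is correct and is exactly the argument the paper has in mind; the corollary is stated without proof precisely because it is an immediate consequence of Theorem~\ref{faltings} together with the standard Tate--Sen vanishing $\hat{\bar{K}}(n)^I=0$ for $n\neq 0$.
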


\begin{corollary}\label{ker}
Let $E'=E\times_XY$. Then
\begin{enumerate}[(i)]
\item $H^2(E'_{\bar{K}},\mathbb{Q}_p(1))=H^1(Y_{\bar{K}},R^1g_*\mathbb{Q}_p(1))\oplus\mathbb{Q}_pe$, where $e$ denotes the characteristic class of the zero section of $g$
\item $H^2(E_{\bar{K}},\mathbb{Q}_p(1))^{I}$ is generated as a $\mathbb{Q}_p$-vector space by the characteristic classes of the irreducible components of singular fibres of $g$ together with $e$.
\end{enumerate}
\end{corollary}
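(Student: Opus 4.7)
The approach is to prove (i) by a Leray spectral sequence argument and (ii) by combining (i) with the preceding corollary and an excision long exact sequence.

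For (i), I would apply the Leray spectral sequence to the smooth proper morphism $g':E'\to Y$. Because $Y_{\bar{K}}$ is smooth affine of dimension one, $H^i(Y_{\bar{K}},\mathcal{F})=0$ for $i\geq 2$, so the spectral sequence collapses to a short exact sequence
\[ 0\to H^1(Y_{\bar{K}},R^1g_*\mathbb{Q}_p(1))\to H^2(E'_{\bar{K}},\mathbb{Q}_p(1))\to H^0(Y_{\bar{K}},\mathbb{Q}_p)\to 0 \]
(using $R^1g_*\mathbb{Q}_p|_Y=R^1g'_*\mathbb{Q}_p$ and $R^2g'_*\mathbb{Q}_p(1)=\mathbb{Q}_p$). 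Since the zero section of $g'$ meets each fibre in a single point, its class $e$ maps to $1\in H^0(Y_{\bar{K}},\mathbb{Q}_p)$, providing a $G_{K_0}$-equivariant splitting.

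For (ii), I would use the excision long exact sequence for $E''=g^{-1}(\Sigma)\subset E$:
\[ H^1(E'_{\bar{K}},\mathbb{Q}_p(1))\to H^2_{E''}(E_{\bar{K}},\mathbb{Q}_p(1))\xrightarrow{\iota}H^2(E_{\bar{K}},\mathbb{Q}_p(1))\xrightarrow{j^*}H^2(E'_{\bar{K}},\mathbb{Q}_p(1)). \]
By (i) together with the preceding corollary, $H^2(E'_{\bar{K}},\mathbb{Q}_p(1))^I=\mathbb{Q}_p e$, so for any $\alpha\in H^2(E_{\bar{K}},\mathbb{Q}_p(1))^I$ one subtracts a multiple of $e$ to arrange $j^*(\alpha)=0$, placing $\alpha$ in $\im\iota$. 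It remains to show that $\im\iota$ is spanned by the cycle classes $[D_\alpha]$ of the irreducible components of $E''$.

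Since the singular fibres are N\'eron $N$-gons, $E''$ is a strict normal crossings divisor whose components $D_\alpha$ are smooth copies of $\mathbb{P}^1$ meeting transversally at the nodes $E''_{\text{sing}}$. As $E''_{\text{sing}}$ has codimension $2$ in $E$, purity yields $H^k_{E''_{\text{sing}}}(E_{\bar{K}},\mathbb{Q}_p(1))=0$ for $k<4$, and the localization sequence for $E''_{\text{sing}}\subset E''$ identifies $H^2_{E''}(E_{\bar{K}},\mathbb{Q}_p(1))\cong\bigoplus_\alpha\mathbb{Q}_p$, with each generator mapping under $\iota$ to $[D_\alpha]$. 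The assumption $\zeta_N\in W$ ensures that the $D_\alpha$ themselves are defined over $W$, so their classes are automatically $I$-invariant. The main obstacle is this last purity computation, though it is routine once the strict normal crossings structure is in place.
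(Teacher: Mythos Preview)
Your proof is correct and follows essentially the same approach as the paper. Both use the Leray spectral sequence for (i), with the zero section class $e$ providing the splitting, and for (ii) both reduce to showing that the kernel of the restriction $H^2(E_{\bar{K}},\mathbb{Q}_p(1))\to H^2(E'_{\bar{K}},\mathbb{Q}_p(1))$ is spanned by the classes of the components of the cuspidal fibres; the paper simply asserts this last fact, while you supply the purity argument explicitly.
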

\begin{proof}
Since $Y_{\bar{K}}$ is an affine curve, the Leray spectral sequence
\[ H^i(Y_{\bar{K}},R^jg_*\mathbb{Q}_p(1))\Rightarrow H^{i+j}(E'_{\bar{K}},\mathbb{Q}_p(1)) \]
gives an exact sequence
\[ 0\to H^1(Y_{\bar{K}},R^1g_*\mathbb{Q}_p(1))\to H^2(E'_{\bar{K}},\mathbb{Q}_p(1))\to H^0(Y_{\bar{K}},R^2g_*\mathbb{Q}_p(1))\to 0 \]
so $H^2(E'_{\bar{K}},\mathbb{Q}_p(1))^{I}\subset H^0(Y_{\bar{K}},R^2g_*\mathbb{Q}_p(1))=\mathbb{Q}_p$. In fact we must have equality since the class $e$ of the zero section of $g$ cannot be trivial. So $e$ gives a splitting of the sequence, proving (i). For (ii) it suffices to note that the kernel of the map $H^2(E_{\bar{K}},\mathbb{Q}_p(1))\to H^2(E'_{\bar{K}},\mathbb{Q}_p(1))$ is generated by the classes of the components of the fibres over the cusps.
\end{proof}

Note that combined with the Shioda-Tate formula (\cite[1.5]{shioda1}) this implies that the rank of the Mordell-Weil group of the generic fibre of $g$ is zero, a result of Shioda \cite[5.1]{shioda1}.

\begin{corollary}\label{HT}
$V_p\Br(E_{\bar{K}})\otimes_{\mathbb{Q}_p}\hat{\bar{K}}=H^2(E,\mathcal{O}_E)\otimes\hat{\bar{K}}(1)\oplus H^0(E,\Omega^2_E)\otimes\hat{\bar{K}}(-1)$. In particular, $V_p\Br(E_{\bar{K}})^{I}=0$.
\end{corollary}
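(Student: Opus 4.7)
The plan is to combine Faltings's Hodge--Tate comparison theorem for the proper smooth $W$-scheme $E$ with the Kummer sequence and Corollary~\ref{ker}. Faltings's theorem first yields
\[
H^2(E_{\bar{K}},\mathbb{Q}_p)(1)\otimes_{\mathbb{Q}_p}\hat{\bar{K}}\cong H^2(E,\mathcal{O}_E)\otimes\hat{\bar{K}}(1)\oplus H^1(E,\Omega^1_E)\otimes\hat{\bar{K}}\oplus H^0(E,\Omega^2_E)\otimes\hat{\bar{K}}(-1),
\]
and the Kummer exact sequence
\[
0\to NS(E_{\bar{K}})\otimes\mathbb{Q}_p\to H^2(E_{\bar{K}},\mathbb{Q}_p)(1)\to V_p\Br(E_{\bar{K}})\to 0
\]
exhibits $V_p\Br(E_{\bar{K}})$ as a quotient of $V:=H^2(E_{\bar{K}},\mathbb{Q}_p)(1)$. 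The task reduces to showing that the image of $NS(E_{\bar{K}})\otimes\hat{\bar{K}}$ in $V\otimes\hat{\bar{K}}$ coincides with the weight-zero summand $H^1(E,\Omega^1_E)\otimes\hat{\bar{K}}$.

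The inclusion $\subseteq$ is routine: $NS(E_{\bar{K}})$ is finitely generated and its Galois action factors through a finite quotient via specialisation (as recalled in \S\ref{mainsection}), so $NS(E_{\bar{K}})\otimes\mathbb{Q}_p$ is an unramified representation of $G_{K_0}$ contained in $V^I$, and after extending scalars the image lies in the weight-zero summand (since by Ax--Sen--Tate the summands $\hat{\bar{K}}(\pm 1)$ have no nonzero $I$-invariants). Conversely, Corollary~\ref{ker}(ii) asserts that $V^I$ is generated by (algebraic) characteristic classes of the components of the singular fibres and of the zero section, so $V^I\subseteq NS(E_{\bar{K}})\otimes\mathbb{Q}_p$; hence $NS(E_{\bar{K}})\otimes\mathbb{Q}_p=V^I$ as $\mathbb{Q}_p$-vector spaces.

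To upgrade this to an equality of $\hat{\bar{K}}$-vector spaces I would compare dimensions. On one side the inclusion $V^I\otimes\hat{\bar{K}}\hookrightarrow H^1(E,\Omega^1_E)\otimes\hat{\bar{K}}$ shows that $\dim_{\mathbb{Q}_p}V^I\leq h^{1,1}(E)$. On the other side, Corollary~\ref{ker}(i) together with Theorem~\ref{faltings} shows that the weight-zero part of $H^2(E'_{\bar{K}},\mathbb{Q}_p(1))\otimes\hat{\bar{K}}$ is $1$-dimensional, spanned by $e$ (the parabolic Eichler--Shimura piece having only weights $\pm 1$). Running the Gysin sequence for the open immersion $E'\hookrightarrow E$ (whose kernel is the $\mathbb{Q}_p$-span $K$ of the cycle classes of the components of the fibres over cusps, and whose cokernel lies in the weight-zero group $H^1(Z,\mathbb{Q}_p(0))$) therefore bounds $h^{1,1}(E)\leq\dim_{\mathbb{Q}_p}K+1$, while Corollary~\ref{ker}(ii) gives $\dim_{\mathbb{Q}_p}V^I=\dim_{\mathbb{Q}_p}K+1$. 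The three quantities coincide, forcing $NS(E_{\bar{K}})\otimes\hat{\bar{K}}=H^1(E,\Omega^1_E)\otimes\hat{\bar{K}}$.

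The main obstacle is this threefold dimension matching, which is essentially a $p$-adic incarnation of Shioda's classical fact that $\rho=h^{1,1}$ for elliptic modular surfaces. Once equality is secured, passing to the quotient in the Kummer sequence yields the stated Hodge--Tate decomposition of $V_p\Br(E_{\bar{K}})\otimes\hat{\bar{K}}$, and the ``in particular'' assertion $V_p\Br(E_{\bar{K}})^I=0$ then follows from Tate's vanishing $\hat{\bar{K}}(n)^I=0$ for $n\neq 0$, as the only Hodge--Tate weights appearing in $V_p\Br(E_{\bar{K}})\otimes\hat{\bar{K}}$ are $\pm 1$.
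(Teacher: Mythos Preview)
Your argument is correct, but it takes a different route from the paper's own proof. The paper avoids the dimension-matching $\rho=h^{1,1}$ altogether: it uses the injection $V_p\Br(E_{\bar K})\hookrightarrow V_p\Br(E'_{\bar K})$ (Grothendieck, \cite[II, 1.10]{brauer}) together with Corollary~\ref{ker}(i) to realise $V_p\Br(E_{\bar K})$ as a subquotient of $H^1(Y_{\bar K},R^1g_*\mathbb{Q}_p(1))$, which by Theorem~\ref{faltings} has only Hodge--Tate weights $\pm 1$. This immediately gives $V_p\Br(E_{\bar K})\otimes\hat{\bar K}\subset H^2(E,\mathcal{O}_E)\otimes\hat{\bar K}(1)\oplus H^0(E,\Omega^2_E)\otimes\hat{\bar K}(-1)$, and the reverse inclusion is the trivial dimension inequality $\dim V_p\Br(E_{\bar K})\geq\dim H^2-h^{1,1}$. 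Your approach instead proves $\rho=h^{1,1}$ as an intermediate step, via the sandwich $\dim V^I\leq h^{1,1}\leq\dim K+1=\dim V^I$; this is essentially the $p$-adic version of Shioda's classical result, and the paper acknowledges it as an alternative proof in the Remark following Corollary~\ref{parabolic}. The paper's route is shorter and uses one clean functorial input (injectivity of $\Br$ under open immersion); yours makes the Lefschetz-type equality explicit, which is of independent interest.

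One minor point: your parenthetical that the cokernel of $H^2(E)\to H^2(E')$ lies in ``the weight-zero group $H^1(Z,\mathbb{Q}_p(0))$'' invokes purity for the singular scheme $Z$ (a union of N\'eron polygons), which is not directly valid; and in fact $H^3_Z(E_{\bar K},\mathbb{Q}_p(1))$ has Hodge--Tate weight $-1$, not $0$ (compare the weight computation in the proof of Corollary~\ref{parabolic}). Fortunately this remark plays no role in your argument: the inequality $h^{1,1}\leq\dim K+1$ follows already from the left-exact sequence $0\to K\to V\to H^2(E'_{\bar K},\mathbb{Q}_p(1))$ by taking weight-zero parts, with no need to control the cokernel. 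You should simply delete that clause.
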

\begin{proof}
We have $V_p\Br(E_{\bar{K}})\subset V_p\Br(E'_{\bar{K}})$ (\cite[II, 1.10]{brauer}) and the latter is a quotient of $H^1(Y_{\bar{K}},R^1g_*\mathbb{Q}_p(1))$ by the last corollary, hence $V_p\Br(E_{\bar{K}})$ is a Hodge-Tate representation with weights contained in $\{\pm 1\}$. In particular, $V_p\Br(E_{\bar{K}})\otimes_{\mathbb{Q}_p}\hat{\bar{K}}\cap H^1(E,\Omega^1_E)\otimes\hat{\bar{K}}=0$, and so $V_p\Br(E_{\bar{K}})\otimes_{\mathbb{Q}_p}\hat{\bar{K}}\subset H^2(E,\mathcal{O}_E)\otimes\hat{\bar{K}}(1)\oplus H^0(E,\Omega^2_E)\otimes\hat{\bar{K}}(-1)$. Since
\begin{eqnarray*}
\dim_{\mathbb{Q}_p}V_p\Br(E_{\bar{K}}) &=& \dim_{\mathbb{Q}_p} H^2(E_{\bar{K}},\mathbb{Q}_p(1))-\dim_{\mathbb{Q}_p} NS(X_{\bar{K}})\otimes\mathbb{Q}_p \\
&\geq &\dim_{\mathbb{Q}_p} H^2(E_{\bar{K}},\mathbb{Q}_p(1))-\dim_{\hat{\bar{K}}}H^1(E,\Omega^1_E)\otimes\hat{\bar{K}} \\
&=&\dim_{\hat{\bar{K}}}H^2(E,\mathcal{O}_E)\otimes\hat{\bar{K}}(1)+\dim_{\hat{\bar{K}}}H^0(E,\Omega^2_E)\otimes\hat{\bar{K}}(-1)
\end{eqnarray*}
this implies the result.
\end{proof}

\begin{corollary}\label{parabolic}
There is a canonical isomorphism $V_p\Br(E_{\bar{K}})=\tilde{H}^1(Y_{\bar{K}},R^1g_*\mathbb{Q}_p(1))$.
\end{corollary}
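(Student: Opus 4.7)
The plan is to identify both $V_p\Br(E_{\bar{K}})$ and $\tilde{H}^1(Y_{\bar{K}}, R^1 g_*\mathbb{Q}_p(1))$ with a common subquotient of $H^2(E_{\bar{K}}, \mathbb{Q}_p(1))$, namely $H^1(X_{\bar{K}}, R^1 g_*\mathbb{Q}_p(1))$. The key observation for the right-hand side is that $R^1 g_*\mathbb{Q}_p(1)$ on $X_{\bar{K}}$ is the middle extension $j_*(R^1 g_*\mathbb{Q}_p(1)|_{Y_{\bar{K}}})$ (where $j\colon Y\hookrightarrow X$), so that $H^1(X, R^1 g_*\mathbb{Q}_p(1)) = \tilde{H}^1(Y, R^1 g_*\mathbb{Q}_p(1))$ by the definition of parabolic cohomology. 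For the left-hand side I would use the Leray spectral sequence for $g\colon E\to X$.

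Concretely, I would proceed in three steps. First, verify the middle extension property of $R^1 g_*\mathbb{Q}_p(1)$: by proper base change the stalk at a cusp $s$ is $H^1(\text{N\'eron } N\text{-gon},\mathbb{Q}_p(1)) \cong \mathbb{Q}_p$, while the stalk of $j_*(R^1 g_*|_Y)$ at $s$ is the local monodromy invariants on a nearby fibre, which is also one-dimensional since the degeneration is semistable with unipotent local monodromy; the natural specialization map (the local invariant cycle theorem) is an isomorphism. Second, apply the Leray spectral sequence for $g\colon E\to X$ (which degenerates at $E_2$ by, e.g., the decomposition theorem for proper morphisms with semistable fibres) to filter $H^2(E_{\bar{K}},\mathbb{Q}_p(1))$ with graded pieces $F^2 = H^2(X,\mathbb{Q}_p(1)) = \mathbb{Q}_p[F]$, $F^1/F^2 = H^1(X, R^1 g_*\mathbb{Q}_p(1))$, and $F^0/F^1 = H^0(X, R^2 g_*\mathbb{Q}_p(1))$. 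Third, show that $NS(E_{\bar{K}})\otimes\mathbb{Q}_p$ exhausts $F^2\oplus(F^0/F^1)$: since the generic Mordell-Weil group is torsion, Shioda--Tate gives $\rk NS(E_{\bar{K}}) = 2 + \sum_s(m_s - 1)$, spanned by $[F]$ (generating $F^2$), the zero section $e$, and the singular-fibre components $[\Theta_i^s]$ (which restrict to $0$ on the generic fibre, so project to $F^0/F^1$, with one relation $\sum_i n_i[\Theta_i^s] = [F]$ per cusp, vanishing in that quotient).

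The main obstacle is Step 3: verifying that $NS(E_{\bar{K}})\otimes\mathbb{Q}_p$ actually \emph{fills} $F^2\oplus(F^0/F^1)$ rather than merely injecting into it. Equivalently, I need $\dim_{\mathbb{Q}_p} H^0(X, R^2 g_*\mathbb{Q}_p(1)) = 1 + \sum_s(m_s - 1)$. I would close this by a dimension count from Hodge theory: Corollary \ref{HT} gives $\dim_{\mathbb{Q}_p} V_p\Br(E_{\bar{K}}) = 2 p_g(E)$, Theorem \ref{faltings} gives $\dim_{\mathbb{Q}_p}\tilde{H}^1(Y, R^1 g_*\mathbb{Q}_p(1)) = h^1(X, L^{-1}) + h^0(X, L\otimes\Omega^1_X) = 2 p_g(E)$ (using Serre duality on the curve $X$), and the equality $\dim H^2(E_{\bar{K}},\mathbb{Q}_p(1)) = \dim NS(E_{\bar{K}}) + \dim V_p\Br(E_{\bar{K}})$ then forces $F^0/F^1$ to have the required dimension, so $V_p\Br(E_{\bar{K}}) \cong F^1/F^2 = \tilde{H}^1(Y_{\bar{K}}, R^1 g_*\mathbb{Q}_p(1))$.
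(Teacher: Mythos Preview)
Your approach differs from the paper's. The paper works over $\mathbb{C}$: it views $V := \tilde{H}^1(Y(\mathbb{C}), R^1g_*\mathbb{Z}(1))$ as a Hodge structure of type $\{(1,-1),(-1,1)\}$ (classical Eichler--Shimura), observes that $V\cap NS(E'_{\mathbb{C}})\otimes\mathbb{Q}=0$ since $NS$ is of type $(0,0)$, and then uses the localization sequence for $E'\subset E$ --- together with the weight of $H^3_{g^{-1}(x)}(E(\mathbb{C}),\mathbb{Q}(1))$ --- to inject $V\otimes\mathbb{Q}$ into $H^2(E(\mathbb{C}),\mathbb{Q}(1))/NS(E_{\mathbb{C}})\otimes\mathbb{Q}$, concluding by the same dimension count you invoke. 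Your route via the Leray filtration of $g:E\to X$ and the middle-extension identity $R^1g_*\mathbb{Q}_p = j_*(R^1g_*\mathbb{Q}_p|_Y)$ is more structural and avoids passage to $\mathbb{C}$.

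There is, however, a genuine gap in Step 3. Matching dimensions $\dim V_p\Br(E_{\bar{K}}) = \dim(F^1/F^2)$ does not by itself identify these two subquotients of $H^2(E_{\bar{K}},\mathbb{Q}_p(1))$; what you need is $NS(E_{\bar{K}})_{\mathbb{Q}_p}\cap F^1 = F^2$. Your parenthetical --- ``$[\Theta_i^s]$ restrict to $0$ on the generic fibre, so project to $F^0/F^1$'' --- does not establish this: vanishing on $E_\eta$ only says the image of $[\Theta_i^s]$ in $H^0(X_{\bar{K}}, R^2g_*\mathbb{Q}_p(1))$ is a section supported on $\Sigma$, not that it is nonzero, so a priori such classes could lie in $F^1$ and contribute to $F^1/F^2$. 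The clean fix mirrors the paper's Hodge-type separation: by the second isomorphism of Theorem~\ref{faltings}, $F^1/F^2 = \tilde{H}^1$ has Hodge--Tate weights $\pm 1$, whereas $NS(E_{\bar{K}})\otimes\mathbb{Q}_p$ is unramified and hence of weight $0$; thus the image of $NS\cap F^1$ in $F^1/F^2$ vanishes, giving $NS\cap F^1=F^2$, and then your dimension count finishes. Alternatively, one can compute directly that the image of $[\Theta_i^s]$ in the stalk $(R^2g_*)_s\cong\mathbb{Q}_p^N$ is the $i$-th row of the intersection matrix of the $N$-gon, and these rows span the $(N-1)$-dimensional space of sections supported at $s$.
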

\begin{proof}
Let $E':=E\times_XY$ and write $V:=\tilde{H}^1(Y(\mathbb{C}),R^1g_*\mathbb{Z}(1))$. By the classical Eichler-Shimura isomorphism (cf. Theorem \ref{faltings}), $V$ is a weight 0 Hodge structure of type $\{(1,-1),(-1,1)\}$. We have $V\subset H^1(Y(\mathbb{C}),R^1g_*\mathbb{Z}(1))\subset H^2(E'(\mathbb{C}),\mathbb{Z}(1))$ and since $NS(E'_{\mathbb{C}}):=\im(NS(E_{\mathbb{C}})\to H^2(E'(\mathbb{C}),\mathbb{Z}(1))$ is a Hodge structure of type $(0,0)$ we have $(V\cap NS(E'_{\mathbb{C}}))\otimes\mathbb{Q}=0$, hence $V\otimes\mathbb{Q}\subset H^2(E'(\mathbb{C}),\mathbb{Q}(1))/NS(E_{\mathbb{C}}')\otimes\mathbb{Q}$.

Now $E\setminus E'=\coprod_{x\in\Sigma}g^{-1}(x)$ is the disjoint union of the singular fibres of $g$, and we have an exact sequence in singular cohomology
\[ H^2(E(\mathbb{C}),\mathbb{Z}(1))\to H^2(E'(\mathbb{C}),\mathbb{Z}(1))\to \oplus_{x\in\Sigma(\bar{K})}H^3_{g^{-1}(x)}(E(\mathbb{C}),\mathbb{Z}(1)). \]
We also have a commutative diagram
\[ \begin{CD}
0 @>>> NS(E_{\mathbb{C}}) @>>> H^2(E(\mathbb{C}),\mathbb{Z}(1)) @>>> H^2(E(\mathbb{C}),\mathbb{Z}(1))/NS(E_{\mathbb{C}}) @>>> 0 \\
@. @VVV @VVV @VVV @. \\
0 @>>> NS(E_{\mathbb{C}}') @>>> H^2(E'(\mathbb{C}),\mathbb{Z}(1)) @>>> H^2(E'(\mathbb{C}),\mathbb{Z}(1))/NS(E_{\mathbb{C}}') @>>> 0
\end{CD} \]
in which the left vertical map is surjective, hence the right vertical map is injective and we deduce an exact sequence
\[ 0\to H^2(E(\mathbb{C}),\mathbb{Z}(1))/NS(E_{\mathbb{C}})\to H^2(E'(\mathbb{C}),\mathbb{Z}(1))/NS(E_{\mathbb{C}}')\to \oplus_{x\in\Sigma(\bar{K})}H^3_{g^{-1}(x)}(E(\mathbb{C}),\mathbb{Z}(1)). \]
By Poincar\'e duality $H^3_{g^{-1}(x)}(E(\mathbb{C}),\mathbb{Q}(1))^*=H^1(g^{-1}(x)(\mathbb{C}),\mathbb{Q}(1))=\mathbb{Q}(1)$ (since $g^{-1}(x)$ is a N\'eron polygon), hence $\oplus_{x\in\Sigma(\bar{K})}H^3_{g^{-1}(x)}(E(\mathbb{C}),\mathbb{Z}(1))$ is a Hodge structure of weight 2 and therefore the map
\[ V\to \oplus_{x\in\Sigma(\bar{K})}H^3_{g^{-1}(x)}(E(\mathbb{C}),\mathbb{Q}(1)) \]
is zero. Thus,
\[ V\otimes\mathbb{Q}\subset H^2(E(\mathbb{C}),\mathbb{Q}(1))/NS(E_{\mathbb{C}})\otimes\mathbb{Q}. \]
Now by the Eichler-Shimura isomorphism we have $\dim V\otimes\mathbb{Q}=2\dim H^0(X,L\otimes\Omega^1_X)$, and since $H^0(X,L\otimes\Omega^1_X)=H^0(E,\Omega^2_E)$ (\cite[th. 6.8]{schuettshioda}) from Corollary \ref{HT} (and Serre duality) we get $\dim V=\dim V_p\Br(E_{\bar{K}})$. Since $\left(H^2(E(\mathbb{C}),\mathbb{Z}(1))/NS(E_{\mathbb{C}})\right)\otimes\mathbb{Q}_p=V_p\Br(E_{\bar{K}})$ we have $V\otimes\mathbb{Q}_p=V_p\Br(E_{\bar{K}})$.
\end{proof} 

\begin{remark}
Shioda \cite{shioda1} shows that $H^1(E,\Omega^1_E)\otimes\hat{\bar{K}}$ is generated by the classes of divisors, which together with the Hodge-Tate decomposition gives another proof of Corollary \ref{HT}. Combining this with Corollary \ref{parabolic}, this gives another proof that $\tilde{H}^1(Y_{\bar{K}},R^1g_*\mathbb{Q}_p(1))$ is a Hodge-Tate representation with weights $\pm 1$.
\end{remark}

\subsection{Application of Hecke operators}
The Eichler-Shimura congruence relation relates the $p$th Hecke operator $T_p$ to the Frobenius morphism at $p$. We exploit this relationship to obtain the following

\begin{theorem}\label{eichlershimura}
If $p\equiv 1\mod N$ and $K_0=\mathbb{Q}_p$, then $T:=\varphi+\varphi^{-1}$ is an endomorphism of $D:=D_{\cris}(V_p\Br(E_{\bar{K}}))$ which satisfies $T(F^1D)\subset F^1D$.
\end{theorem}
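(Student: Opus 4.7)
The plan is to identify $T$, up to the scalar $p$, with the Hecke correspondence $T_p$ acting on $E$; since $T_p$ is an algebraic correspondence it automatically preserves the Hodge filtration on crystalline cohomology.

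First I would realise $T_p$ as a finite flat algebraic correspondence on $E$ over $\mathbb{Z}_p$. Using the moduli scheme $X(N,\Gamma_0(p))$ of triples $(E',\alpha,C)$, where $(E',\alpha)$ is classified by $X(N)$ and $C\subset E'$ is a cyclic subgroup of order $p$, the two natural projections $(E',\alpha,C)\mapsto (E',\alpha)$ and $(E',\alpha,C)\mapsto (E'/C,\bar\alpha)$ both land in $X(N)$ precisely because the hypothesis $p\equiv 1\bmod N$ trivialises the diamond operator on $X(N)$. Over the cusps this uses generalised elliptic curves in the sense of \cite{delrap}, and pulling back along the universal $p$-isogeny lifts the correspondence to the surface $E$. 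By functoriality of the de Rham--crystalline comparison, $T_p$ then induces a $\varphi$-equivariant endomorphism of $H^2_{\cris}(E_k/K_0)$ preserving the Hodge filtration, and its compatibility with the specialisation map and the cycle class of Section~\ref{mainsection} forces it to act on $D=D_{\cris}(V_p\Br(E_{\bar K}))$, still satisfying $T_p(F^1D)\subset F^1D$.

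I would then invoke the Eichler--Shimura congruence relation
\[
\varphi^2 - T_p\,\varphi + p^2\langle p\rangle = 0
\]
on the weight-$2$ parabolic cohomology $\tilde H^1(Y_{\bar K},R^1g_*\mathbb{Q}_p)$ (the piece corresponding to weight-$3$ cusp forms for $\Gamma(N)$). Under the hypothesis $p\equiv 1\bmod N$ we have $\langle p\rangle=1$, and the Tate twist by $1$ from Corollary~\ref{parabolic} rescales Frobenius by $p^{-1}$, yielding the identity $T_p = p\,(\varphi+\varphi^{-1})$ on $D$. Since $\varphi$ on $D$ is automatically bijective (its eigenvalues are nonzero Weil numbers, and over $K_0=\mathbb{Q}_p$ it is $\mathbb{Q}_p$-linear), $T=\varphi+\varphi^{-1}=p^{-1}T_p$ is a well-defined endomorphism of $D$ preserving $F^1D$, as required.

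The principal obstacle I foresee is pinning down the precise form of the Eichler--Shimura relation in the crystalline setting with the correct Tate twist, directly on the surface $E$ rather than on $X$; the $\ell$-adic version is classical, and one can transfer it via $p$-adic comparison isomorphisms or invoke the direct crystalline formulation discussed in \cite{colemanedixhoven}.
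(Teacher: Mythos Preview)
Your approach is essentially the paper's. Two points deserve sharpening. First, the two projections from $X(N,\Gamma_0(p))$ to $X(N)$ exist for any $p\nmid N$; the hypothesis $p\equiv 1\bmod N$ is not needed to \emph{define} the correspondence but only to kill the diamond operator $I_p$ in the congruence $T_p|_{E_k}=F+I_p\,{}^tF$, exactly as you use $\langle p\rangle=1$ later. Second, the paper does not transfer the Eichler--Shimura relation from $\ell$-adic cohomology via comparison; it establishes $T_p|_{E_k}=F+{}^tF$ directly as an equality of cycles in $E_k\times_kE_k$ (by analysing the irreducible components of the special fibre of $X(N,p)$), which then holds in every Weil cohomology. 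The filtration-preservation step is likewise made precise: the paper takes the image $Z\subset E\times_W E$ of the correspondence, notes it is flat over $W$, and checks that the de~Rham--crystalline isomorphism matches the characteristic classes of $Z_{K_0}$ and $Z_k$ (via compatibility with Chern classes and a finite locally free resolution of $\mathcal{O}_Z$). Your ``functoriality of the de~Rham--crystalline comparison'' is exactly this, but the flatness of $Z$ over $W$ is the substance behind it and should be stated.
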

\begin{proof}
Recall (\cite[V, 1.14]{delrap}) that there is a regular proper $\mathbb{Z}[1/N]$-scheme $X(N,p)$ (denoted $\mathcal{M}_{\Gamma(N)\cap\Gamma_0(p)}$ in loc. cit.; in \cite{deligne} one only considers the dense open $M_{N,p}=\mathcal{M}_{\Gamma(N)\cap\Gamma_0(p)}^0$) classifying isomorphism classes of $p$-isogenies $\phi:(E\to S,\alpha)\to (E'\to S,\alpha')$. It is smooth away from $p$ and has semi-stable reduction at $p$. Consider the two canonical morphisms
\[ q_1:X(N,p)\to X(N):\phi\mapsto (E\to S,\alpha) \]
and
\[ q_2:X(N,p)\to X(N):\phi\mapsto (E'\to S,\alpha'). \]
One can show that each $q_i$ is finite and flat. The universal object over $X(N,p)$ is a $p$-isogeny
\[ \phi:q_1^*E\to q_2^*E \]
where $E\to X(N)$ is the universal curve. By definition (cf. \cite[3.18]{deligne}), the Hecke correspondence $T_p$ on $E$ is the correspondence
\[ \xymatrix{
& q_1^*E \ar[dl]_{q_2\circ\phi} \ar[dr]^{q_1} & \\
E & & E
} \]
that is, $T_p=q_{1,*}\phi^*q_2^*$. Now the Eichler-Shimura relation states that
\[ T_p|_{E_k}=F+I_p\trans{F} \]
where $F$ is the Frobenius, $\trans{F}$ is its transpose as a correspondence, and $I_p$ is the morphism of $X(N)$ defined $I_p(\mathcal{E},\alpha):=(\mathcal{E},p\alpha)$. This can be proven in the same way as \cite[4.8]{deligne}, and we sketch the proof.

We first check the equality over a dense open of $X(N,p)$. As shown in \cite[4.3]{deligne}, there is a dense open of $X(N,p)_{\mathbb{F}_p}$ isomorphic to the disjoint union of two copies of the dense open $Y(N)_{\mathbb{F}_p}^h\subset Y(N)_{\mathbb{F}_p}$, complement of the supersingular points. Over one of these copies we have $q_1=\text{id}$, and $q_2=F_{Y(N)}$ the Frobenius of $Y(N)^h_{\mathbb{F}_p}$, over the other $q_1=F_{Y(N)}$ and $q_2=I_p$ (cf. \cite[V, 1.17]{delrap}). In the first case the universal isogeny is the relative Frobenius $F_{E/Y(N)}:(E,\alpha)\to (F^*_{Y(N)}E,F^*_{Y(N)}\alpha)$, in the second the universal isogeny is its transpose, the Verschiebung $\trans{F_{E/Y(N)}}:(F^*_{Y(N)}E,F^*_{Y(N)}\alpha)\to (E,p^{-1}\alpha)=I_p^*(E,\alpha)$. So in the first case the Hecke correspondence is
\[ \xymatrix{
& & E \ar[dl]_{F_{E/Y(N)}} \ar@{=}[ddrr] & & \\
& F_{Y(N)}^*E \ar[dl]_{F_{Y(N)}} & & & \\
E & & & & E
} \]
which is obviously $F$. In the second case it is
\[ \xymatrix{
& & F^*_{Y(N)}E \ar[dl]_{\trans{F_{E/Y(N)}}} \ar[ddrr]^{F_{Y(N)}} & & \\
& I_p^*E \ar[dl]_{I_p} & & & \\
E & & & & E
} \]
i.e. $I_p\trans{F}$.

For the general case, first recall that the open immersion $Y(N)^h_{\mathbb{F}_p}\coprod Y(N)^h_{\mathbb{F}_p}\to X(N,p)_{\mathbb{F}_p}$ extends to a morphism $X(N)_{\mathbb{F}_p}\coprod X(N)_{\mathbb{F}_p}\to X(N,p)_{\mathbb{F}_p}$ which identifies the domain with the disjoint union of the irreducible components of the target \cite[V, \S1]{delrap}, hence is surjective. By the above this gives a surjective morphism $E_k\coprod F_{X(N)}^*E_k\to q_1^*E_k$. Therefore, $\im(q_1^*E_k\to E_k\times_k E_k)=\im(E_k\coprod F_{X(N)}^*E_k\to E_k\times_kE_k)$ has at most two irreducible components. Its irreducible components are therefore the transpose of the graph of $F$ and the graph of $FI_p^{-1}$ (note that these are irreducible since $E_k$ is). This proves the Eichler-Shimura relation.

Now since $p\equiv 1\mod N$, $I_p$ is the identity map and we obtain the relation
\begin{equation}\label{ES} T_p|_{E_{k}}=F+\trans{F}. \end{equation}
Since we have $p\varphi=F$ and $\trans{F}F=p^2$ ($\dim E_k=2$), it follows that $pT=T_p|_{E_{k}}$.

Finally, let $Z:=\im(q_1^*E\to E\times_WE)$. Note that since $q_1^*E$ is flat over $W$, so is $Z$. The correspondence $T_p$ acts as $\pr_{2,*}\circ c_Z\circ\pr_1^*$, where $c_Z$ is the cup product with the characteristic class of $Z$, and $\pr_i:E\times_WE\to E$ are the projections. Therefore, to complete the proof it remains only to notice that the canonical isomorphism $H^*_{\dR}(E_{K_0}\times_{K_0}E_{K_0})=H^*_{\cris}(E_k\times_kE_k/K_0)$ is compatible with taking characteristic classes of closed subschemes of $E\times_WE$ which are flat over $W$. Indeed, the isomorphism is compatible with Chern classes of line bundles, hence also (by the splitting principle) with Chern classes of vector bundles, and the structure sheaf of a closed subscheme of $E\times_WE$ has a finite resolution by vector bundles which, if flat over $W$, reduces to give a resolution on $E_k\times_kE_k$.
\end{proof}

As a corollary we obtain Theorem \ref{main}.

\begin{corollary}\label{tate}
If $k=\mathbb{F}_p$ and $p\equiv 1\mod N$, then under hypothesis (\ref{SS}) we have
\begin{enumerate}[(i)]
\item Tate's conjecture holds for $E_k$
\item $NS(E_{\bar{K}})^{G_{K_0}}\otimes\mathbb{Q}=NS(E_{\bar{k}})^{G_{K_0}}\otimes\mathbb{Q}$
\item the Mordell-Weil group of the generic fibre of $E_k\to X_k$ is isomorphic to $(\mathbb{Z}/N)^2$.
\end{enumerate}
\end{corollary}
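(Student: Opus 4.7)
The plan is to obtain (i) and (ii) by applying Corollary \ref{maincor} after verifying the three hypotheses of Theorem \ref{mainth} in our setting, and then to derive (iii) from (ii) by means of the Shioda-Tate formula together with the modular description of torsion sections on $E(N)$.

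First I would check the three hypotheses. The vanishing $V_p\Br(E_{\bar K})^{G_{K_0}} = 0$ is immediate from Corollary \ref{HT}, which gives even the vanishing of the inertia invariants. The condition $T(F^1 D) \subset F^1 D$ is precisely the content of Theorem \ref{eichlershimura}; this is where the hypothesis $p \equiv 1 \mod N$ is used, since it forces the diamond operator $I_p$ appearing in the Eichler-Shimura congruence to be trivial. For the partial semi-simplicity $D^{(\varphi - 1)^2 = 0} = D^{\varphi = 1}$, I would invoke (\ref{SS}) for $E_k$: on $H^2_{\cris}(E_k/\mathbb{Q}_p)[1]$ the Frobenius $\varphi$ is $p^{-1} F_p$, so $D^{\varphi = 1}$ matches the $F_p = p$ eigenspace and $D^{(\varphi - 1)^2 = 0}$ the generalised $p$-eigenspace on the quotient. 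Because divisor classes have $F_p$-eigenvalue $p$ on $H^2_{\cris}(E_k)$, the subspace $D_{\cris}(NS(E_{\bar K})_{\mathbb{Q}_p})$ lies inside the honest $\varphi = 1$ eigenspace of $H^2_{\cris}(E_k)[1]$, so (\ref{SS}) descends cleanly to the Brauer quotient. Corollary \ref{maincor} then yields (i) and (ii).

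For (iii), I would combine (ii) with Corollaries \ref{HT} and \ref{ker}(ii): these imply that $NS(E_{\bar K}) \otimes \mathbb{Q}_p = H^2(E_{\bar K}, \mathbb{Q}_p(1))^I$ is spanned by the class $e$ of the zero section together with the classes of the components of the N\'eron $N$-gon singular fibres of $g$. As $\zeta_N \in W$, all these components are already defined over $K_0$, so $NS(E_{\bar K})^{G_{K_0}} \otimes \mathbb{Q}$ has rank $2 + \sum_{x \in \Sigma}(n_x - 1)$, and by (ii) the same rank is attained by $NS(E_{\bar k})^{G_k} \otimes \mathbb{Q}$. The Shioda-Tate formula then forces the Mordell-Weil rank of the generic fibre of $E_k \to X_k$ to vanish. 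The inclusion of $(\mathbb{Z}/N)^2$ into the torsion subgroup of this Mordell-Weil group is built in from the level $N$ structure; to see that there is no further torsion, one notes that an extra prime-to-$p$ torsion section would force the modular map $X(N)_k \to X(1)_k$ to factor nontrivially through some $X_0(m)_k$, while an extra $p$-torsion section would produce a nontrivial section of the \'etale part of $E_k[p] \to X_k$, both of which are incompatible with the moduli interpretation of $X(N)$.

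The main obstacle is the derivation of the partial semi-simplicity on $D$ from (\ref{SS}) on $H^2_{\cris}(E_k)$: this requires tracking the $\varphi$-action through the Kummer-induced exact sequence and verifying, by a snake-lemma computation, that passing to the Brauer quotient neither creates new generalised $\varphi$-fixed vectors nor destroys the equality between $D^{(\varphi-1)^2=0}$ and $D^{\varphi=1}$. The torsion part of (iii) is also mildly delicate but amounts to a standard modular argument.
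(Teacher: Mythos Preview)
Your approach is correct and matches the paper's own proof for (i) and (ii): verify the three hypotheses of Theorem~\ref{mainth} (via Corollary~\ref{HT}, Theorem~\ref{eichlershimura}, and the descent of (\ref{SS}) to the Brauer quotient) and then invoke Corollary~\ref{maincor}. The paper compresses the (\ref{SS})-descent into a single sentence, but your more careful justification is warranted; note, though, that the cleanest way to see it is pure linear algebra---(\ref{SS}) says $\varphi$ acts semisimply on the generalized $1$-eigenspace of $H^2_{\cris}(E_k)[1]$, and this property passes to any $\varphi$-stable quotient---so you need not appeal to the specific fact that the divisor classes here are $G_{K_0}$-fixed.

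For (iii) there is a minor divergence. The paper simply observes that (ii) forces the Mordell--Weil group of the generic fibre of $E_k\to X_k$ to be finite (since the ``trivial'' divisor classes already span $NS(E_{\bar K})^{G_{K_0}}\otimes\mathbb{Q}$ by Corollary~\ref{ker}(ii), hence also $NS(E_{\bar k})^{G_{K_0}}\otimes\mathbb{Q}$, and Shioda--Tate gives rank zero), and then cites \cite[Appendix, Cor.~3]{shioda1} for the identification of the torsion with $(\mathbb{Z}/N)^2$. Your direct modular sketch of the torsion computation is a reasonable alternative, though the factorization you describe would be through a modular curve of higher full level (or $X_1(m)$) rather than $X_0(m)$; in any case citing Shioda is shorter and avoids this detour.
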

\begin{proof}
Note that (\ref{SS}) implies that $D_{\cris}(V_p\Br(E(N)_{\bar{K}})^{(\varphi-1)^2=0}=D_{\cris}(V_p\Br(E(N)_{\bar{K}}))^{\varphi=1}$. So by Theorem \ref{eichlershimura}, (i) and (ii) follow from Corollary \ref{maincor}. It follows from (ii) that the Mordell-Weil group is finite, hence isomorphic to $(\mathbb{Z}/N)^2$ by \cite[Appendix, Cor. 3]{shioda1}.
\end{proof}

\subsection{Validity of (\ref{SS})}
We can show that (\ref{SS}) holds for all primes outside of a set of density zero. For the sake of brevity we only sketch the argument. Let $Y_1(N)$ denote the Deligne-Mumford moduli stack of pairs $(\mathcal{E}\to S,P,P')$ where $\mathcal{E}\to S$ is an elliptic curve over a $\mathbb{Z}[1/N]$-scheme $S$, $P\in\mathcal{E}[N](S)$ a point of exact order $N$ and $P'\in(\tfrac{\mathcal{E}[N]}{<P>})(S)\cong\mu_N(S)$ a point of exact order $N$. For $N\geq 5$ it is known to be a $\mathbb{Z}[1/N,\zeta_N]$-scheme with geometrically connected fibres. Let $g:E_1(N)\to Y_1(N)$ be the universal elliptic curve and consider $V_N:=\tilde{H}^1(Y_1(N)_{\bar{K}},R^1g_*\mathbb{Q}_p)(1)$. We will first explain why (\ref{SS}) holds for $D_{\cris}(V_N)$ outside a set of primes of density zero.

For every proper divisor $d$ of $N$ there are pairs of maps $\pi_i:V_{N/d}\to V_N$ ($i=1,2$) defined just like for modular forms. (One map is just the canonical ``inclusion'' and the other arises from the inclusion $\bigl(\begin{smallmatrix}
d&0\\ 0&1
\end{smallmatrix} \bigr)\Gamma_1(N)\bigl(\begin{smallmatrix}
d&0\\ 0&1
\end{smallmatrix} \bigr)^{-1}\subset\Gamma_1(N/d)$.) The image of these maps is a subspace $V_N^{\old}$ of $V_N$. Its orthogonal complement under the cup product is a subspace $V_N^{\new}$ which corresponds to newforms. The latter splits under the action of the Hecke algebra as a direct sum
\[ V_N^{\new}=\oplus_{i=1}^mV(f_i) \]
where $f_1,...,f_m$ are a choice of representatives of the $\Gal(\bar{\mathbb{Q}}/\mathbb{Q})$-conjugacy classes of weight 3 normalized newforms for $\Gamma_1(N)$, and $V(f_i)(-1)$ is the Galois representation associated to $f_i$ by Deligne. Now, by autoduality we have
\[ V_N^{\old}\subset\prod_{d|N,d>1}V_{N/d}^{\oplus 2} \]
so by induction on $N$ we may assume (\ref{SS}) to hold for $D_{\cris}(V_N^{\old})$. The space $V(f_i)$ is free of rank 2 over $K_{f_i}\otimes\mathbb{Q}_p$, where $K_{f_i}$ is the field of coefficients of $f_i$. If $\varphi$ is the Frobenius, we want to show that $D_{\cris}(V(f_i))^{(\varphi^n-1)^2=0}=D_{\cris}(V(f_i))^{\varphi^n=1}$ for some fixed $n$. This is immediate if $f_i$ has CM: in this case $\varphi^2$ acts as a diagonal matrix \cite{ribet}. If $f_i$ does not have CM and (\ref{SS}) does not hold, then the minimal polynomial of $\varphi^n$ is $(t-1)^2$, so both eigenvalues of $\varphi^n$ are equal to $1$. Since the trace of $\varphi$ is equal to $\tfrac{a_p}{p}$, where $a_p$ is the $p$th coefficient of $f_i$, this implies that $a_p=(\zeta+\zeta')p$ for some $n$th roots of unity $\zeta,\zeta'$. By \cite{serrecebotarev}, this can only happen for a set of primes of density zero (dependent on the choice of $n$). For given $N$ and $K_0$ we can take $n$ such that $p^n=\#k$.

Finally, if $\zeta_{N^2}\in K_0$, then there is a dominant morphism $Y_1(N^2)_{K_0}\to Y(N)_{K_0}$ arising from the inclusion $\bigl(\begin{smallmatrix}
N&0\\ 0&1
\end{smallmatrix} \bigr)\Gamma_1(N^2)\bigl(\begin{smallmatrix}
N&0\\ 0&1
\end{smallmatrix}\bigr)^{-1}\subset\Gamma(N)$. From this one can deduce (\ref{SS}) for $D_{\cris}(\tilde{H}^1(Y(N)_{\bar{K}},R^1g_*\mathbb{Q}_p(1)))=D_{\cris}(V_p\Br(E(N)_{\bar{K}}))$.

\begin{corollary}\label{tatecor}
For $k=\mathbb{F}_p$ and $p\equiv 1\mod N$, the conclusion of Corollary \ref{tate} holds outside of a set of primes of density zero.
\end{corollary}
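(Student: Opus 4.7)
The plan is to combine Corollary \ref{tate}---which already gives the desired conclusion under hypothesis (\ref{SS})---with a verification of (\ref{SS}) for $D_{\cris}(V_p\Br(E(N)_{\bar{K}}))$ outside a set of primes of density zero. First I would reduce the question to one about the parabolic cohomology $V_N := \tilde{H}^1(Y_1(N)_{\bar{K}}, R^1g_*\mathbb{Q}_p)(1)$ of $Y_1(N)$, exploiting the dominant morphism $Y_1(N^2)_{K_0}\to Y(N)_{K_0}$ arising from conjugation by $\bigl(\begin{smallmatrix} N & 0 \\ 0 & 1 \end{smallmatrix}\bigr)$: pullback along this morphism identifies $V_p\Br(E(N)_{\bar{K}})$ with a sub-$\varphi$-module of $V_{N^2}$, so (\ref{SS}) for $V_{N^2}$ implies (\ref{SS}) for $V_p\Br(E(N)_{\bar{K}})$.

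Next I would decompose $V_N = V_N^{\new} \oplus V_N^{\old}$ via the Hecke algebra. The old part embeds into $\prod_{d\mid N,\,d>1} V_{N/d}^{\oplus 2}$ through the two degeneracy maps, so an induction on $N$ handles it and reduces the problem to the new part. The new part splits further as $V_N^{\new} = \bigoplus_i V(f_i)$, indexed by Galois conjugacy classes of normalized weight-$3$ newforms $f_i$ for $\Gamma_1(N)$, with each $V(f_i)$ free of rank $2$ over $K_{f_i} \otimes \mathbb{Q}_p$.

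For each eigenform $f_i$ I would analyze the minimal polynomial of $\varphi^n$ on $D_{\cris}(V(f_i))$, where $n$ is chosen so that $p^n = \#k$. If $f_i$ has complex multiplication, then by \cite{ribet} $\varphi^2$ is semisimple and (\ref{SS}) holds automatically. Otherwise, failure of (\ref{SS}) forces the minimal polynomial of $\varphi^n$ to be $(t-1)^2$, hence both eigenvalues of $\varphi^n$ equal $1$; combined with the identity $\Tr(\varphi) = a_p/p$, this forces $a_p = (\zeta + \zeta')p$ for some $n$th roots of unity $\zeta,\zeta'$. By Serre \cite{serrecebotarev}, the set of such primes has density zero for a fixed non-CM $f_i$.

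The hard part will be keeping track of the exceptional set produced by the argument: each step of the induction on proper divisors of $N^2$, and each non-CM Hecke-isotypic component within a given level, contributes its own bad set. But the set of divisors of $N^2$ and the set of newforms at each intermediate level are both finite, so the total exceptional set is a finite union of density-zero sets and therefore still of density zero. This verifies (\ref{SS}) for $D_{\cris}(V_p\Br(E(N)_{\bar{K}}))$ on a density-one set of primes $p\equiv 1\bmod N$, and the corollary follows from Corollary \ref{tate}.
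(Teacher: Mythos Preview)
Your proposal is correct and follows essentially the same route as the paper's own argument in the subsection ``Validity of (\ref{SS})'': reduction to $V_{N^2}$ via the dominant map $Y_1(N^2)_{K_0}\to Y(N)_{K_0}$, the new/old decomposition with induction on the level for the old part, and the CM/non-CM dichotomy on each $V(f_i)$ handled respectively by Ribet's diagonalizability and Serre's Chebotarev-type density estimate. The only cosmetic difference is that the paper defines $V_N^{\new}$ as the orthogonal complement of $V_N^{\old}$ under cup product rather than via the Hecke algebra, and your final paragraph making the finite-union-of-density-zero-sets bookkeeping explicit is a welcome clarification of what the paper leaves implicit.
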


\section{The finite height case}
Let $W\subset V$ be a finite extension of discrete valuation rings, $k_V$ (resp. $K$) the residue (resp. fraction) field of $V$, and set $T:=\Spec(V)$.

\subsection{The formal Brauer group}

Let $f:X\to T$ be a proper smooth morphism. Artin and Mazur considered the sheaves
\[ S\rightsquigarrow\widehat{\mathbb{G}}_m(S):=\ker(\mathbb{G}_m(S)\to\mathbb{G}_m(S_{\red})) \]
on the big \'etale site of $X$ and
\[ \Phi^i:=R^if_*\widehat{\mathbb{G}}_m \]
on the big \'etale site of $T$, and proved the following (\cite[II, 2.12]{artinmazur}).

\begin{theorem}[Artin-Mazur]
If $\Phi^{i-1}$ is a formally smooth functor and $H^{i+1}(X_{k_V},\mathcal{O}_{X_{k_V}})=0$, then $\Phi^i$ is pro-representable by a formal Lie group over $V$.
\end{theorem}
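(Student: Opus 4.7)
The plan is to apply Schlessinger's criterion for pro-representability to the restriction of $\Phi^i$ to the category $\mathcal{C}_V$ of Artin local $V$-algebras with residue field $k_V$, and then to promote the resulting pro-representing ring to a formal power series ring, so that $\Spf$ of it, equipped with the group law inherited from $\widehat{\mathbb{G}}_m$, is a formal Lie group over $V$.

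The technical heart is a single long exact sequence. For each small surjection $A' \twoheadrightarrow A$ in $\mathcal{C}_V$ with square-zero kernel $M$, I would first establish the short exact sequence of \'etale sheaves on $X_{A'}$
\[ 0 \to M \otimes_{k_V} \mathcal{O}_{X_{k_V}} \to \mathbb{G}_{m,X_{A'}} \to \mathbb{G}_{m,X_A} \to 0, \]
the left arrow being the truncated exponential $m \mapsto 1+m$, which is well-defined since $M^2=0$. Because $M \otimes_{k_V} \mathcal{O}_{X_{k_V}}$ is supported on the reduced fibre, its higher cohomology on the \'etale site of $X_{A'}$ coincides with the coherent cohomology of $\mathcal{O}_{X_{k_V}}$ on $X_{k_V}$; since the kernel term already lies in $\widehat{\mathbb{G}}_m$, restriction to $\widehat{\mathbb{G}}_m$ is harmless and yields the long exact sequence
\[ \cdots \to \Phi^{i-1}(A') \to \Phi^{i-1}(A) \to M\otimes H^i(X_{k_V},\mathcal{O}_{X_{k_V}}) \to \Phi^i(A') \to \Phi^i(A) \to M\otimes H^{i+1}(X_{k_V},\mathcal{O}_{X_{k_V}}) \to \cdots. \]
The hypothesis $H^{i+1}(X_{k_V},\mathcal{O}_{X_{k_V}})=0$ makes $\Phi^i(A') \to \Phi^i(A)$ surjective, so $\Phi^i$ is itself formally smooth; the assumed formal smoothness of $\Phi^{i-1}$ forces the preceding connecting map to vanish, yielding the clean additive description
\[ 0 \to M\otimes_{k_V} H^i(X_{k_V},\mathcal{O}_{X_{k_V}}) \to \Phi^i(A') \to \Phi^i(A) \to 0. \]

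Given this, Schlessinger's conditions are mechanical. Taking $A' = k_V[\epsilon]$, $A = k_V$ (and noting that $\Phi^i(k_V)=0$ because $\widehat{\mathbb{G}}_m$ vanishes on reduced schemes) the sequence identifies the tangent space of $\Phi^i$ with the finite-dimensional $k_V$-space $H^i(X_{k_V},\mathcal{O}_{X_{k_V}})$, giving condition (H3). Applied to a pair of small maps with common target, the additivity of the kernel gives the Schlessinger pushout isomorphism, and thus (H1), (H2) and (H4). Pro-representability by a complete local Noetherian $V$-algebra $R$ then follows, and formal smoothness of $\Phi^i$ together with the finite-dimensional tangent space forces $R$ to be a power series ring over $V$ in $\dim_{k_V} H^i(X_{k_V},\mathcal{O}_{X_{k_V}})$ variables. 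The abelian group structure inherited from $\widehat{\mathbb{G}}_m$ finally promotes $\Spf R$ to a commutative formal Lie group, as required.

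The main obstacle is really the sheaf-theoretic foundations: checking that the truncated exponential genuinely defines a short exact sequence of \'etale sheaves on $X_{A'}$, and that the higher \'etale cohomology of $M \otimes_{k_V} \mathcal{O}_{X_{k_V}}$ on the big \'etale site reduces to the coherent cohomology of $\mathcal{O}_{X_{k_V}}$ on $X_{k_V}$ (via topological invariance of the \'etale site for nilpotent thickenings together with the comparison between \'etale and Zariski cohomology of coherent sheaves). Once those comparisons are in place, everything else is a formal consequence of Schlessinger.
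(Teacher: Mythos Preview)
The paper does not give its own proof of this theorem; it merely records the statement and cites \cite[II, 2.12]{artinmazur}. Your proposal is a correct reconstruction of the standard Artin--Mazur argument via Schlessinger's criterion and is essentially what one finds at that reference, so there is nothing to compare.
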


\begin{corollary}
Assume $\dim X_{k_V}=2$. If $\Phi^1$ is formally smooth, then $\Phi^2$ is pro-representable by a formal Lie group over $V$, called the formal Brauer group.
\end{corollary}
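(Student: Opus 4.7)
The plan is to deduce the corollary as an immediate application of the Artin--Mazur theorem stated just above, taking $i = 2$. There are really only two things to check: that $\Phi^{i-1} = \Phi^1$ is formally smooth, and that $H^{i+1}(X_{k_V}, \mathcal{O}_{X_{k_V}}) = H^3(X_{k_V}, \mathcal{O}_{X_{k_V}})$ vanishes.

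The first hypothesis is given by assumption. For the second, I would invoke Grothendieck's vanishing theorem: since $X_{k_V}$ is a proper scheme of dimension $2$ over the field $k_V$, coherent cohomology vanishes in degrees strictly greater than $2$, and in particular $H^3(X_{k_V}, \mathcal{O}_{X_{k_V}}) = 0$. With both hypotheses verified, the Artin--Mazur theorem gives that $\Phi^2$ is pro-representable by a formal Lie group over $V$, which is by definition the formal Brauer group.

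There is no real obstacle here; the statement is a specialization of the preceding theorem to the case of surfaces, and the only non-trivial input is the dimension bound which forces the higher coherent cohomology to vanish. The terminology ``formal Brauer group'' is justified because $\Phi^2 = R^2 f_* \widehat{\mathbb{G}}_m$ is the formal completion along the zero section of the relative Brauer functor $R^2 f_* \mathbb{G}_m$.
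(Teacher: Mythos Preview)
Your proposal is correct and matches the paper's intent: the corollary is stated without proof in the paper, precisely because it is the immediate specialization of the Artin--Mazur theorem to $i=2$, with $H^3(X_{k_V},\mathcal{O}_{X_{k_V}})=0$ forced by the dimension hypothesis via Grothendieck vanishing. One small remark: Grothendieck's vanishing theorem needs only that $X_{k_V}$ be noetherian of dimension $2$, not proper, though of course properness holds here since $f$ is proper.
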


Now if we assume $\Phi^2$ is a formal Lie group of \emph{finite height} (i.e. a connected $p$-divisible group), then we obtain the following.

\begin{corollary}\label{repcor}
Assume $k$ is finite and $\dim X_{k_V}=2$. Let $\bar{V}$ be the normalization of $V$ in $\bar{K}$. If $\Phi^1$ is formally smooth and $\Phi^2$ is of finite height, then the canonical map
\[ \rho:\varinjlim_{V'\subset\bar{V}}\varprojlim_nH^2(X_{V'/p^nV'},\mathbb{G}_m)\to\Br(X_{\bar{k}}) \]
is surjective on $p$-primary torsion components, where the direct limit is taken over all finite extensions of discrete valuation rings $V\subset V'$ contained in $\bar{V}$.
\end{corollary}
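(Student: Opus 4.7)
The plan is to build the map $\rho$ out of the long exact sequence attached to the short exact sequence of \'etale sheaves
\[ 0 \to \widehat{\mathbb{G}}_m \to \mathbb{G}_m \to \iota_*\mathbb{G}_m \to 0 \]
on each infinitesimal thickening $X_{V'/p^nV'}$, where $\iota\colon X_{k_{V'}}\hookrightarrow X_{V'/p^nV'}$ is the inclusion of the reduced closed subscheme. Since $\iota$ is a closed immersion of schemes with the same underlying topological space, $\iota_*$ is exact and $H^i(X_{V'/p^nV'},\iota_*\mathbb{G}_m)=H^i(X_{k_{V'}},\mathbb{G}_m)$; in particular the long exact sequence produces for every $n$
\[ H^2(X_{V'/p^nV'},\widehat{\mathbb{G}}_m)\to H^2(X_{V'/p^nV'},\mathbb{G}_m)\to\Br(X_{k_{V'}})\to H^3(X_{V'/p^nV'},\widehat{\mathbb{G}}_m), \]
whose outer terms, by definition, are the values of the Artin--Mazur functors $\Phi^2$ and $\Phi^3$ at $V'/p^nV'$.

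The crucial point is that $\Phi^3=0$. Under the hypothesis that $\Phi^1$ is formally smooth, the cited Artin--Mazur theorem yields that $\Phi^2$ is pro-represented by a formal Lie group, and is in particular formally smooth; combined with $H^4(X_{k_V},\mathcal{O}_{X_{k_V}})=0$ (from $\dim X_{k_V}=2$), applying the same theorem with $i=3$ shows $\Phi^3$ is pro-represented by a formal Lie group. Its tangent space at the origin is $H^3(X_{k_V},\mathcal{O}_{X_{k_V}})=0$, so $\Phi^3$ is zero-dimensional, hence trivial. Consequently every map $H^2(X_{V'/p^nV'},\mathbb{G}_m)\to\Br(X_{k_{V'}})$ is already surjective. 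The finite-height hypothesis on $\Phi^2$ makes it a connected $p$-divisible group, so the transition maps $\Phi^2(V'/p^{n+1}V')\twoheadrightarrow\Phi^2(V'/p^nV')$ are automatically surjective. The kernel system of $H^2(X_{V'/p^nV'},\mathbb{G}_m)\twoheadrightarrow\Br(X_{k_{V'}})$ thus satisfies the Mittag--Leffler condition, the $\varprojlim_n^1$ obstruction vanishes, and one obtains a surjection $\varprojlim_n H^2(X_{V'/p^nV'},\mathbb{G}_m)\twoheadrightarrow\Br(X_{k_{V'}})$.

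Finally, since every finite extension $k'/k_V$ inside $\bar{k}$ is the residue field of some finite $V\subseteq V'\subseteq\bar{V}$, passing to $\varinjlim_{V'}$ yields a surjection onto $\varinjlim_{V'}\Br(X_{k_{V'}})=\Br(X_{\bar{k}})$, and restricting to the $p$-primary torsion subgroup of the target gives the corollary. The principal technical hurdle is the double application of Artin--Mazur's pro-representability theorem leading to the vanishing $\Phi^3=0$, together with the verification that the $\varprojlim_n^1$-obstruction to surjectivity disappears; both are enabled by the formal-smoothness consequences of our hypotheses, with finite height of $\Phi^2$ entering principally to make the Mittag--Leffler step automatic.
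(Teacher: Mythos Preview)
There are two problems, one minor and one fatal.

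\textbf{Minor.} Your identification $H^i(X_{V'/p^nV'},\widehat{\mathbb{G}}_m)=\Phi^i(V'/p^nV')$ is not correct: $V'/p^nV'$ has residue field $k_{V'}$, which is finite but not separably closed, so the Hochschild--Serre spectral sequence for the Galois cover $\mathcal{V}/p^n\mathcal{V}\to V'/p^nV'$ (with $\mathcal{V}$ the maximal unramified extension) gives
\[
0\to \Phi^{i-1}(\mathcal{V}/p^n\mathcal{V})_{G_{k_{V'}}}\to H^i(X_{V'/p^nV'},\widehat{\mathbb{G}}_m)\to \Phi^i(V'/p^nV')\to 0.
\]
Fortunately your conclusions survive: $H^3(X_{V'/p^nV'},\widehat{\mathbb{G}}_m)=0$ holds simply because $\widehat{\mathbb{G}}_m$ has a finite filtration with coherent graded pieces on a surface (this is the paper's reason), and the kernel system is Mittag--Leffler because each $H^2(X_{V'/p^nV'},\widehat{\mathbb{G}}_m)$ is \emph{finite} (same filtration, finite residue field). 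Neither the vanishing of $\Phi^3$ nor the finite height of $\Phi^2$ is needed here; formal smoothness alone already gives surjective transition maps for $\Phi^2$, so your invocation of finite height at this step does no work.

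\textbf{Fatal.} You establish that $\rho$ is surjective, but the corollary asks for surjectivity \emph{on $p$-primary torsion}: given $x\in\Br(X_{\bar{k}})$ with $p^mx=0$, one needs a preimage $y$ in the source with $p^my=0$. Plain surjectivity does not give this, since the source $\varinjlim_{V'}\varprojlim_n H^2(X_{V'/p^nV'},\mathbb{G}_m)$ is an inverse limit of torsion groups and is not itself torsion. The missing ingredient is that $\ker(\rho)$ is a $p$-divisible abelian group: then any lift $y$ of $x$ has $p^my\in\ker(\rho)$, so $p^my=p^mz$ for some $z\in\ker(\rho)$, and $y-z$ is the desired $p^m$-torsion lift. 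Proving this $p$-divisibility is precisely where both hypotheses enter in an essential way. Using the exact sequence displayed above (with $i=2$), one must show that $\varinjlim_{V'}\Phi^2(V')=\Phi^2(\bar{V})$ is $p$-divisible---this uses the finite-height assumption via Tate's theorem on points of $p$-divisible groups---and that $\varinjlim_{V'}\varprojlim_n\Phi^1(\mathcal{V}/p^n\mathcal{V})_{G_{k_{V'}}}$ is $p$-divisible, which requires a separate argument exploiting the formal smoothness of $\Phi^1$ and discreteness of the relevant Galois module. Your proof uses neither hypothesis in a way that survives scrutiny, and without them the statement is not known.
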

\begin{proof}
It suffices to show that $\rho$ is surjective and $\ker(\rho)$ is a $p$-divisible abelian group. To this end, consider the exact sequence of \'etale sheaves on $X_{V'/p^nV'}$
\begin{equation}\label{gmdevissage} 1\to\widehat{\mathbb{G}}_m\to\mathbb{G}_{m,X_{V'/p^nV'}}\to\mathbb{G}_{m,X_{k_{V'}}}\to 1.
\end{equation}
Note that since $V'$ is a henselian discrete valuation ring with finite residue field for any torsion \'etale sheaf $\mathcal{F}$ on $V'$ we have $H^i(V',\mathcal{F})=0$ for $i>1$ and $H^1(V',\mathcal{F})=(\mathcal{F}(\bar{k}))_{G_{k_{V'}}}$ (coinvariants), where $G_{k_{V'}}:=\Gal(\bar{k}/k_{V'})$. Let $V'\subset\mathcal{V}$ be the maximal unramified extension. From the Leray spectral sequence for the Galois covering $V'\subset\mathcal{V}$ we get an exact sequence
\[ 0\to H^1(X_{\mathcal{V}/p^n\mathcal{V}},\widehat{\mathbb{G}}_m)_{G_{k_{V'}}}\to H^2(X_{V'/p^nV'},\widehat{\mathbb{G}}_m)\to H^2(X_{\mathcal{V}/p^n\mathcal{V}},\widehat{\mathbb{G}}_m)^{G_{k_{V'}}}\to 0 \]
i.e.
\[ 0\to\Phi^1(\mathcal{V}/p^n\mathcal{V})_{G_{k_{V'}}}\to H^2(X_{V'/p^nV'},\widehat{\mathbb{G}}_m)\to \Phi^2(V'/p^nV')\to 0. \]
Now since the ideal $\ker(\mathcal{O}_{X_{V'/p^nV'}}\to\mathcal{O}_{X_{k_{V'}}})$ is nilpotent, the sheaf $\widehat{\mathbb{G}}_m$ has a finite filtration with quotients isomorphic to $\mathcal{O}_{X_{k_{V'}}}$, and this implies that $H^i(X_{V'/p^nV'},\widehat{\mathbb{G}}_m)$ is finite for all $i$. So taking the inverse limit in the exact sequence we obtain an exact sequence
\[ 0\to\varprojlim_n\Phi^1(\mathcal{V}/p^n\mathcal{V})_{G_{k_{V'}}}\to\varprojlim_nH^2(X_{V'/p^nV'},\widehat{\mathbb{G}}_m)\to\Phi^2(V')\to 0. \]
Now assume the following:
\begin{equation}\label{assump}\tag{D} \varinjlim_{V'\subset\mathcal{V}}\varprojlim_n\Phi^1(\mathcal{V}/p^n\mathcal{V})_{G_{k_{V'}}}\;\text{is a $p$-divisible abelian group.}
\end{equation}
This implies that
\[ \varinjlim_{V'\subset\bar{V}}\varprojlim_nH^2(X_{V'/p^nV'},\widehat{\mathbb{G}}_m) \]
is a $p$-divisible abelian group: by the above we have a surjective map
\[ \varinjlim_{V'\subset\bar{V}}\varprojlim_nH^2(X_{V'/p^nV'},\widehat{\mathbb{G}}_m)\to\varinjlim_{V'\subset\bar{V}}\Phi^2(V')=\Phi^2(\bar{V}) \]
whose kernel is $p$-divisible by (\ref{assump}), and by \cite[Cor. to Prop. 4]{tate} $\Phi^2(\bar{V})$ is a $p$-divisible abelian group. Taking cohomology in the exact sequence (\ref{gmdevissage}) we get an exact sequence
\[ H^2(X_{V'/p^nV'},\widehat{\mathbb{G}}_m)\to H^2(X_{V'/p^nV'},\mathbb{G}_m)\to\Br(X_{k_{V'}})\to 0 \]
(the exactness on the right follows from $\dim X_{k_{V'}}=2$). Since $H^2(X_{V'/p^nV'},\widehat{\mathbb{G}}_m)$ is finite taking limits we get an exact sequence
\[ \varinjlim_{V'\subset\bar{V}}\varprojlim_nH^2(X_{V'/p^nV'},\widehat{\mathbb{G}}_m)\to\varinjlim_{V'\subset\bar{V}}\varprojlim_nH^2(X_{V'/p^nV'},\mathbb{G}_m)\overset{\rho}{\to}\Br(X_{\bar{k}})\to 0 \]
and this implies the result.

So it suffices to show (\ref{assump}). We claim that $M:=\left(\varprojlim_n\Phi^1(\mathcal{V}/p^n\mathcal{V})\right)\otimes\mathbb{Z}/p$ is a discrete $G_{k_{V'}}$-module (i.e. the stabilizer of every element is open in $G_{k_{V'}}$). Indeed, taking cohomology in (\ref{gmdevissage}) we get an exact sequence
\[ 0\to \Phi^1(\mathcal{V}/p^n\mathcal{V})\to\Pic(X_{\mathcal{V}/p^n\mathcal{V}})\to\Pic(X_{k_{\mathcal{V}}}) \]
whence an exact sequence
\[ 0\to\varprojlim_n\Phi^1(\mathcal{V}/p^n\mathcal{V})\to\varprojlim_n\Pic(X_{\mathcal{V}/p^n\mathcal{V}})\to\Pic(X_{k_{\mathcal{V}}}). \]
Now, denoting by $\widehat{X_{\mathcal{V}}}$ the formal completion of $X_{\mathcal{V}}$ along its special fibre over $\Spec(\mathcal{V})$, we have
\[ \varprojlim_n\Pic(X_{\mathcal{V}/p^n\mathcal{V}})=\Pic(\widehat{X_{\mathcal{V}}}) \]
so it clearly suffices to show that $\Pic(\widehat{X_{\mathcal{V}}})\otimes\mathbb{Z}/p$ is a discrete $G_{k_{V'}}$-module. But taking cohomology in the exact sequence on $\widehat{X_{\mathcal{V}}}$
\[ 1\to 1+p^2\mathcal{O}_{\widehat{X_{\mathcal{V}}}}\to\mathbb{G}_{m,\widehat{X_{\mathcal{V}}}}\to\mathbb{G}_{m,X_{\mathcal{V}/p^2\mathcal{V}}}\to 1 \]
we get an exact sequence
\[ H^1(\widehat{X_{\mathcal{V}}},1+p^2\mathcal{O}_{\widehat{X_{\mathcal{V}}}})\to\Pic(\widehat{X_{\mathcal{V}}})\to\Pic(X_{\mathcal{V}/p^2\mathcal{V}}) \]
and one sees easily (cf. \S\ref{comparisonsection}) that the (usual) logarithm induces an isomorphism
\[ \log:H^1(\widehat{X_{\mathcal{V}}},1+p^2\mathcal{O}_{\widehat{X_{\mathcal{V}}}})\cong H^1(\widehat{X_{\mathcal{V}}},p^2\mathcal{O}_{\widehat{X_{\mathcal{V}}}}) \]
and the claim follows from this. Finally, note that since $\Phi^1$ is formally smooth by assumption, the maps $\Phi^1(\mathcal{V}/p^{n+1}\mathcal{V})\to\Phi^1(\mathcal{V}/p^n\mathcal{V})$ are surjective for all $n$, so in particular $\varprojlim_n^{(1)}\Phi^1(\mathcal{V}/p^n\mathcal{V})=0$. If $\gamma\in G_{k_{V'}}$ is a topological generator, then we have an exact sequence
\[ 0\to \Phi^1(\mathcal{V}/p^n\mathcal{V})^{G_{k_{V'}}}\to \Phi^1(\mathcal{V}/p^n\mathcal{V})\overset{\gamma-1}{\to}\Phi^1(\mathcal{V}/p^n\mathcal{V})\to \Phi^1(\mathcal{V}/p^n\mathcal{V})_{G_{k_{V'}}}\to 0. \]
Noting that $\Phi^1(\mathcal{V}/p^n\mathcal{V})^{G_{k_{V'}}}=\Phi^1(V'/p^nV')$, hence $\varprojlim_n^{(1)}\Phi^1(\mathcal{V}/p^n\mathcal{V})^{G_{k_{V'}}}=0$ (again by the formal smoothness of $\Phi^1$), and taking limits in this sequence we find
\[ \varprojlim_n\Phi^1(\mathcal{V}/p^n\mathcal{V})_{G_{k_{V'}}}=\left(\varprojlim_n\Phi^1(\mathcal{V}/p^n\mathcal{V})\right)_{G_{k_{V'}}}. \]
Since direct limits commute with tensor product we get
\[ \left(\varinjlim_{V'\subset\mathcal{V}}\varprojlim_n\Phi^1(\mathcal{V}/p^n\mathcal{V})_{G_{k_{V'}}}\right)\otimes\mathbb{Z}/p=\varinjlim_{V'\subset\mathcal{V}}\left((\varprojlim_n\Phi^1(\mathcal{V}/p^n\mathcal{V}))\otimes\mathbb{Z}/p\right)_{G_{k_{V'}}}=\varinjlim_{V'\subset\mathcal{V}}M_{G_{k_{V'}}}. \]
Now as $M$ is a torsion discrete $G_{k_{V'}}$-module we have
\[ \varinjlim_{V'\subset\mathcal{V}}M_{G_{k_{V'}}}=\varinjlim_{V'\subset\mathcal{V}}H^1(k_{V'},M)=0 \]
which proves (\ref{assump}).
\end{proof}

\subsubsection{Example: elliptic surfaces}\label{elliptic}
Suppose that $f:X\to T$ is a projective flat morphism with geometrically integral fibres. Then $R^1f_*\mathbb{G}_m$ is representable by a scheme $\Pic_{X/T}$ which is locally of finite type over $T$ (\cite[9.4.8]{kleiman}). Moreover, for all $t\in T$, $H^1(X_t,\mathcal{O}_{X_t})$ is the tangent space of $(\Pic_{X/T})_t=\Pic_{X_t/t}$ at the origin (\cite[9.5.11]{kleiman}), hence $\Pic_{X_t/t}$ is smooth over $t$ if and only if
\begin{equation*} \dim\Pic_{X_t/t}\geq\dim H^1(X_t,\mathcal{O}_{X_t})
\end{equation*}
in which case we have equality since the reverse inequality always holds.

Now let $Y\to T$ be a smooth projective curve, and let $g:X\to Y$ be an elliptic fibration, i.e. $g$ is a projective morphism with a section whose geometric fibres are connected of dimension 1 and whose generic fibre in each fibre over $T$ is smooth of genus 1. Assume further that $X_t$ is a non-isotrivial smooth elliptic surface for all $t\in T$ (in particular $X$ is smooth over $T$). Then the map
\[ \Pic^0_{Y_t/t}\to\Pic^0_{X_t/t} \]
is an isomorphism for all $t\in T$ (\cite[th. 6.12]{schuettshioda}). In particular, the map $H^1(Y_t,\mathcal{O}_{Y_t})\to H^1(X_t,\mathcal{O}_{X_t})$ is an isomorphism for all $t\in T$, hence $\dim H^1(X_t,\mathcal{O}_{X_t})$ is independent of $t$, since this is true for $Y$. $\Pic_{X_K/K}$ being smooth ($\cha(K)=0$), we get $\dim\Pic_{X_K/K}=\dim H^1(X_k,\mathcal{O}_{X_k})$. Since the connected component $\Pic^0_{X/T}$ of $0$ in $\Pic_{X/T}$ is proper over $T$ (\cite[9.6.18]{kleiman}), we have
\[ \dim\Pic_{X_{k_V}/k_V}=\dim\Pic^0_{X_{k_V}/k_V}\geq\dim\Pic^0_{X_K/K}=\dim H^1(X_{k_V},\mathcal{O}_{X_{k_V}}) \]
so $\Pic_{X_{k_V}/k_V}$ is smooth. By \cite[9.5.20]{kleiman}, this implies that $\Pic^0_{X/T}$ is smooth over $T$. (Note however that this does not imply that $\Pic_{X/T}$ is smooth over $T$, cf. \cite[9.5.22]{kleiman}.) Since $\Phi^1$ is the completion of $\Pic^0_{X/T}$ along its zero section, it is formally smooth in this case.

\subsection{Relationship between $\Br(X_{\bar{V}})$ and $\Br(X_{\bar{k}})$}\label{comparisonsection}
For a scheme $S$ write $S_n:=S\otimes\mathbb{Z}/p^n$. Let $f:X\to T$ be a proper smooth morphism with geometrically connected fibres, and let $\hat{X}:=\varinjlim_nX_n$ be the $p$-adic completion of $X$. In the sequel let $j:X_K\subset X$ and $i:X_1\subset X$ be the inclusions. Then we have exact sequences of \'etale sheaves on $X$
\[ 0\to j_!\mathbb{G}_m\to\mathbb{G}_m\to i_*i^*\mathbb{G}_m\to 0 \]
and
\[ 0\to 1+p^2i_*i^*\mathcal{O}_{X}\to i_*i^*\mathbb{G}_m\to i_*\mathbb{G}_{m,X_2}\to 0 \]
where, for a morphism $g$ of schemes, $g^*$ always denotes the inverse image functor and not module pullback.

\begin{lemma}\label{torsion}
We have $i^*\mathbb{G}_m[p^n]=i^*f^*(\mathcal{O}^*_T[p^n])=\mathbb{G}_{m,\hat{X}}[p^n]$ for all $n$, where $\mathbb{G}_{m,\hat{X}}=\varprojlim_n\mathbb{G}_{m,X_n}$.
\end{lemma}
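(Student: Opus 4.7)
The plan is to identify both $\mathbb{G}_m[p^n]$ on $X$ and $\mathbb{G}_{m,\hat X}[p^n]$ on $\hat X$ as pullbacks of the group scheme $\mu_{p^n}$ from the respective bases $T$ and $\hat T=\varinjlim T_n$, and then to reduce the lemma to the statement that the strict henselization $V^{\mathrm{sh}}$ of $V$ and its $p$-adic completion $\widehat{V^{\mathrm{sh}}}$ contain the same $p^n$-th roots of unity.

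For the first equality, I would observe that $\mathbb{G}_m[p^n]$ on $X_{\et}$ is representable by the finite flat $X$-group scheme $\mu_{p^n,X}$, and that formation of $\mu_{p^n}$ commutes with base change, so $\mu_{p^n,X}=f^*\mu_{p^n,T}$. Since $\mu_{p^n,T}$ represents $\mathcal{O}_T^*[p^n]$ on $T_{\et}$, this gives $\mathbb{G}_m[p^n]=f^*(\mathcal{O}_T^*[p^n])$ as \'etale sheaves on $X$; applying $i^*$ then yields the first equality.

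For the second, the same representability argument applied to the formal morphism $\hat f\colon\hat X\to\hat T$, using topological invariance of the \'etale site to identify $\hat X_{\et}=X_{1,\et}$ and $\hat T_{\et}=t_{\et}$ (where $t=\Spec k_V$), shows $\mathbb{G}_{m,\hat X}[p^n]=\hat f^*(\mathcal{O}_{\hat T}^*[p^n])$. Combining this with the base change identity $i^*f^*=f_1^*i_T^*$ for the Cartesian square with $f_1\colon X_1\to t$ and $i_T\colon t\hookrightarrow T$, together with the identification $\hat f^*=f_1^*$ under the above equivalences, reduces the second equality to the claim that $i_T^*(\mathcal{O}_T^*[p^n])=\mathcal{O}_{\hat T}^*[p^n]$ as sheaves on $t_{\et}$. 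On stalks at a geometric point $\bar t$, this becomes $\mu_{p^n}(V^{\mathrm{sh}})=\mu_{p^n}(\widehat{V^{\mathrm{sh}}})$.

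This last equality is the main substantive point. I would deduce it from the fact that $V^{\mathrm{sh}}$ is a Henselian DVR, so its fraction field $K^{\mathrm{sh}}$ is separably algebraically closed in the completion $\widehat{K^{\mathrm{sh}}}$; since $\cha K^{\mathrm{sh}}=0$ the polynomial $T^{p^n}-1$ is separable, so any root in $\widehat{K^{\mathrm{sh}}}$ already lies in $K^{\mathrm{sh}}$, and integrality then places it in $V^{\mathrm{sh}}$. Beyond this, the remaining steps are routine manipulations with \'etale sheaves and base change.
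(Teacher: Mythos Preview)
Your argument has a genuine gap in the very first step. You write that since $\mu_{p^n,X}=\mu_{p^n,T}\times_T X$ as schemes, one gets $\mathbb{G}_m[p^n]=f^*(\mathcal{O}_T^*[p^n])$ as \'etale sheaves on $X$. But for a $T$-scheme $G$, the small-\'etale-sheaf pullback $f^*h_G$ agrees with $h_{G\times_T X}$ only when $G\to T$ is \'etale (so that $h_G$ is locally constant); here $\mu_{p^n,T}$ is \emph{not} \'etale over $T$ because $p$ is not invertible. Concretely, at a geometric point $\bar x$ of $X_1$ the stalk of $i^*\mathbb{G}_m[p^n]$ is $\mu_{p^n}(\mathcal{O}_{X,\bar x}^{\mathrm{sh}})$, whereas the stalk of $i^*f^*(\mathcal{O}_T^*[p^n])$ is $\mu_{p^n}(V^{\mathrm{sh}})$. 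That these coincide is exactly the content of the first equality, and it does not come for free: the paper uses that the generic fibre of $f$ is geometrically connected (so $K$ is algebraically closed in the function field) together with flatness to show that the integral closure of $V$ in $\mathcal{O}_{X,\bar x}^{\mathrm{sh}}$ is unramified over $V$, whence any $p$-power root of unity already lies in $V$.

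The same conflation recurs in your treatment of the second equality. Your claimed identity $\mathbb{G}_{m,\hat X}[p^n]=\hat f^*(\mathcal{O}_{\hat T}^*[p^n])$ is again an instance of ``scheme base change $=$ sheaf pullback'' for the non-\'etale group $\mu_{p^n}$, and it reduces the problem to the wrong statement. What actually has to be shown is $\mu_{p^n}(\mathcal{O}_{X,\bar x}^{\mathrm{sh}})=\mu_{p^n}\bigl(\widehat{\mathcal{O}_{X,\bar x}^{\mathrm{sh}}}\bigr)$, i.e.\ that the strictly henselian local ring of $X$ at $\bar x$ is integrally closed in its $p$-adic completion; the paper obtains this from excellence of $X$ (geometric integrality of formal fibres). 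Your argument instead proves $\mu_{p^n}(V^{\mathrm{sh}})=\mu_{p^n}(\widehat{V^{\mathrm{sh}}})$, which is correct but does not suffice, since the reduction to it relied on the unjustified representability step. In short, the substantive work---passing from the local rings of $X$ down to those of $T$, and from the henselian local rings of $X$ to their completions---is precisely what your ``routine manipulations'' skip over.
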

\begin{proof}
Since the generic fibre of $f$ is geometrically connected, $K$ is algebraically closed in the fraction field of $X$. Let $x\to X_{k_V}$ be a geometric point and let $V'$ be the normalization of $V$ in $\mathcal{O}_{X,x}$. If $\pi\in V$ is a uniformizer, by flatness we have $V'/\pi V'\subset\mathcal{O}_{X,x}/\pi\mathcal{O}_{X,x}=\mathcal{O}_{X_{k_V},x}$, hence $\pi\in V'$ is also a uniformizer and $V'$ is unramified over $V$. In particular any $p$-power root of unity in $\mathcal{O}_{X,x}$ already lies in $V$, i.e. $i^*\mathbb{G}_m[p^n]=i^*f^*(\mathcal{O}^*_T[p^n])$. For the second equality, note that since $X$ is excellent, the formal fibres of $\mathcal{O}_{X,x}$ are geometrically integral (\cite[IV, 18.9.1]{ega}), hence $\mathcal{O}_{X,x}$ is integrally closed in its completion (since $X$ is normal). In particular, $\mathbb{G}_{m,\hat{X}}[p^n]=i^*\mathbb{G}_m[p^n]$.
\end{proof}

\begin{lemma}\label{logexp}
Let $A$ be a $p$-adically complete ring. For all $n\geq 1$ the logarithm induces a natural isomorphism $1+p^{n+1}A\cong p^{n+1}A$. In particular, $(1+p^{n+1}A)^p=1+p^{n+2}A$.
\end{lemma}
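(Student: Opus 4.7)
The plan is to realize the isomorphism via the classical mutually inverse power series
\[ \log(1+x) = \sum_{k\geq 1}\frac{(-1)^{k-1}}{k}x^k,\qquad \exp(y) = \sum_{k\geq 0}\frac{y^k}{k!}, \]
and then deduce the multiplicative claim $(1+p^{n+1}A)^p = 1+p^{n+2}A$ from the identity $\exp(py) = \exp(y)^p$.

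First I would establish convergence in the $p$-adically complete ring $A$ and verify that the series land in the claimed sets. For $\log$, the crude bound $v_p(k)\leq \log_p k$ gives $v_p(x^k/k)\geq k(n+1) - \log_p k$ for $x\in p^{n+1}A$, which is $\geq n+1$ for all $k\geq 1$ and tends to $+\infty$; hence $\log(1+x) \in p^{n+1}A$. For $\exp$, the key input is Legendre's formula $v_p(k!) = (k - s_p(k))/(p-1) \leq (k-1)/(p-1)$, which gives $v_p(y^k/k!) \geq k(n+1) - (k-1)/(p-1)$ for $y\in p^{n+1}A$. The hypothesis $n\geq 1$ enters precisely here: it ensures $n+1 > 1/(p-1)$ for every prime $p$ (the case $p=2$ being tight), so the bound is $\geq n+1$ for all $k\geq 1$ and tends to $+\infty$, giving $\exp(y) \in 1+p^{n+1}A$.

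Having convergence, I would then invoke the formal identities $\exp\circ\log=\mathrm{id}$, $\log\circ\exp=\mathrm{id}$, and $\exp(y+z)=\exp(y)\exp(z)$, which hold in the formal power series ring over $\mathbb{Q}$ and transfer to $A$ once all series are known to converge absolutely for the relevant arguments. This shows $\log$ and $\exp$ are mutually inverse bijections between $1+p^{n+1}A$ and $p^{n+1}A$, natural in $A$.

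The final assertion then follows easily. Given $u\in 1+p^{n+1}A$, set $y=\log u\in p^{n+1}A$; then $u^p = \exp(y)^p = \exp(py) \in 1+p^{n+2}A$ since $py\in p^{n+2}A$. Conversely, any $v\in 1+p^{n+2}A$ equals $\exp(z)$ for some $z\in p^{n+2}A$ (by the already-proven isomorphism with $n$ replaced by $n+1$, using $n+1\geq 2$); writing $z=py$ with $y\in p^{n+1}A$ (possible since $p^{n+2}A=p\cdot p^{n+1}A$) gives $v=\exp(y)^p\in(1+p^{n+1}A)^p$. The one technical point requiring care is the convergence estimate for $\exp$ at $p=2$, which is precisely why the hypothesis $n\geq 1$ cannot be weakened to $n\geq 0$.
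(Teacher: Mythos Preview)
Your proof is correct and follows essentially the same approach as the paper: both use the power series for $\log$ and $\exp$ as mutually inverse maps, with convergence of $\exp$ controlled by the Legendre bound $v_p(k!)\leq (k-1)/(p-1)$. The paper is terser and leaves the ``in particular'' claim implicit, whereas you spell it out via $\exp(py)=\exp(y)^p$; this added detail is fine and does not constitute a different method.
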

\begin{proof}
The logarithm induces a map
\begin{eqnarray*}
\log:1+p^{n+1}A &\to& p^{n+1}A \\
1+p^{n+1}x &\mapsto& -p^{n+1}\sum_{r>0}\dfrac{p^{(n+1)(r-1)}(-x)^{r}}{r}
\end{eqnarray*}
and the exponential
\begin{eqnarray*}
\exp:p^{n+1}A &\to& 1+p^{n+1}A \\
p^{n+1}x &\mapsto& 1+p^{n+1}\sum_{r>0}\dfrac{p^{(n+1)(r-1)}x^{r}}{r!}
\end{eqnarray*}
is its inverse. Recall that $v_p(r!)\leq\dfrac{r-1}{p-1}$, so in particular $v_p(r!)<(n+1)(r-1)$ and $\lim_{r\to+\infty}(n+1)(r-1)-v_p(r!)=+\infty$ for all $n\geq 1$, hence these homomorphisms are well-defined.
\end{proof}

\begin{proposition}\label{logexp2}
The inclusion $(1+p^{n+1}i_*i^*\mathcal{O}_{X})^p\subset 1+p^{n+2}i_*i^*\mathcal{O}_{X}$ is bijective and $\left(1+p^{n+1}i_*i^*\mathcal{O}_{X}\right)\otimes\mathbb{Z}/p^m\cong p^{n+1}\mathcal{O}_X/p^{n+1+m}\mathcal{O}_X$ for all $n,m\geq 1$.
\end{proposition}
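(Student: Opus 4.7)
The plan is to reduce both assertions to Lemma \ref{logexp} by checking them on \'etale stalks. At any geometric point $\bar{x}$ of $X$ lying above $X_1$, the stalk of $i_*i^*\mathcal{O}_X$ is the $p$-adic completion of the strict henselian local ring $\mathcal{O}_{X,\bar{x}}$ (as used implicitly in the proof of Lemma \ref{torsion}), while at a geometric point above $X_K$ the stalk vanishes. Since images, quotients, and tensor products with $\mathbb{Z}/p^m$ of sheaves of abelian groups on the \'etale site are all formed stalk-wise, everything reduces to the analogous ring-theoretic statements for a $p$-adically complete ring $A$.

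For the first assertion, the inclusion $(1+p^{n+1}A)^p \subset 1+p^{n+2}A$ is clear from the binomial expansion. For the reverse inclusion I would transport the situation through the logarithm: by Lemma \ref{logexp}, $\log\colon 1+p^{n+1}A\xrightarrow{\sim}p^{n+1}A$ is a group isomorphism intertwining the $p$-power map on the multiplicative side with multiplication by $p$ on the additive side. Hence
\[ \log\bigl((1+p^{n+1}A)^p\bigr)=p\cdot p^{n+1}A=p^{n+2}A=\log(1+p^{n+2}A), \]
and the injectivity of $\log$ forces $(1+p^{n+1}A)^p=1+p^{n+2}A$.

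For the second assertion, iterating the first by a short induction on $k\geq 0$ yields $(1+p^{n+1}A)^{p^k}=1+p^{n+1+k}A$, and in particular
\[ (1+p^{n+1}A)\otimes_{\mathbb{Z}}\mathbb{Z}/p^m=(1+p^{n+1}A)/(1+p^{n+1+m}A). \]
Applying $\log$ identifies this quotient with $p^{n+1}A/p^{n+1+m}A$. Since the natural map $\mathcal{O}_X\to i_*i^*\mathcal{O}_X$ becomes an isomorphism on stalks above $X_1$ after reduction modulo any power of $p$, this is precisely the stalk of $p^{n+1}\mathcal{O}_X/p^{n+1+m}\mathcal{O}_X$, giving the stated sheaf isomorphism.

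I do not foresee a genuine obstacle here: the whole argument is a sheafified version of the standard log/exp correspondence between principal units and a sufficiently deep additive subgroup, and the convergence estimates needed (in particular $v_p(r!)<(n+1)(r-1)$ for $n\geq 1$) are already recorded in Lemma \ref{logexp}. The one point to keep in mind is that the condition $n\geq 1$ is essential, since $\log$ need not converge on $1+pA$.
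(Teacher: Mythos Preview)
Your identification of the stalk is wrong, and this is exactly the point where the proposition has content beyond Lemma \ref{logexp}. Since the paper explicitly takes $i^*$ to be the sheaf-theoretic inverse image (not module pullback), for a geometric point $\bar{x}$ of $X$ lying over $X_1$ one has
\[
(i_*i^*\mathcal{O}_X)_{\bar{x}} = (i^*\mathcal{O}_X)_{\bar{x}} = (\mathcal{O}_X)_{\bar{x}} = \mathcal{O}_{X,\bar{x}}^{sh}=:R,
\]
the strict henselization, which is a henselian regular local ring but is \emph{not} $p$-adically complete. Lemma \ref{torsion} does not say otherwise: its proof works with $\mathcal{O}_{X,x}$ and only invokes the completion to observe that $R$ is integrally closed in $\hat{R}$. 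Consequently you cannot apply Lemma \ref{logexp} directly to the stalk, and your argument for the surjectivity of $(1+p^{n+1}R)^p\to 1+p^{n+2}R$ breaks down as written.

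The paper's proof is precisely designed to bridge this gap. One compares the exact sequence $1\to(1+p^{n+1}R)^p\to 1+p^{n+1}R\to C\to 0$ with its completed analogue, where Lemma \ref{logexp} identifies the cokernel as $p^{n+1}R/p^{n+2}R$. The map $C\to p^{n+1}R/p^{n+2}R$ is visibly surjective; injectivity is reduced to showing that the cokernel of $(1+p^{n+1}R)^p\hookrightarrow(1+p^{n+1}\hat{R})^p$ is $p$-torsion free, which follows from $R$ being integrally closed in $\hat{R}$. Equivalently: given $1+p^{n+2}a\in R$, Lemma \ref{logexp} produces a $p$th root $1+p^{n+1}b\in\hat{R}$, and since this element satisfies the monic equation $Y^p=1+p^{n+2}a$ over $R$, integral closedness forces $b\in R$. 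Your approach can be repaired along exactly these lines, but the integral-closure step is not optional.
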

\begin{proof}
This is local so we may assume that $X=\Spec(R)$ is the spectrum of a henselian regular local ring with $p$ in its maximal ideal. If $\hat{R}$ denotes the $p$-adic completion we have a commutative diagram with exact rows
\[ \xymatrix{
1 \ar [r] & (1+p^{n+1}R)^p \ar[d] \ar[r] & 1+p^{n+1}R \ar[d] \ar[r] & C \ar[d] \ar[r] & 0 \\
1 \ar[r] & (1+p^{n+1}\hat{R})^p \ar[r] & 1+p^{n+1}\hat{R}  \ar[r] & p^{n+1}R/p^{n+2}R  \ar[r] & 0
} \]
where the middle vertical map is injective and the bottom right term follows from Lemma \ref{logexp}. The statement amounts to showing that the right vertical map is injective since it is already clearly surjective. For this it suffices to show that the cokernel $D$ of the left vertical map is $p$-torsion free. This follows immediately from the fact, already noted in the proof of Lemma \ref{torsion}, that $R$ is integrally closed in $\hat{R}$.
\end{proof}

\begin{corollary}\label{logexp3}
For all $n$ the canonical map
\[ \mathbb{G}_{m,X}/p^{n}\to i_*\mathbb{G}_{m,\hat{X}}/p^{n} \]
is bijective.
\end{corollary}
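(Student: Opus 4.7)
The plan is to factor the canonical map as
\[ \mathbb{G}_{m,X}/p^n \to i_*i^*\mathbb{G}_m/p^n \to i_*\mathbb{G}_{m,\hat{X}}/p^n \]
and show that both arrows are isomorphisms.

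For the first arrow, apply the snake lemma for multiplication by $p^n$ to the sequence $0 \to j_!\mathbb{G}_m \to \mathbb{G}_m \to i_*i^*\mathbb{G}_m \to 0$ from the beginning of \S\ref{comparisonsection}. Since $\cha(K)=0$, $\mathbb{G}_{m,X_K}$ is $p^n$-divisible and hence $j_!\mathbb{G}_m \otimes \mathbb{Z}/p^n = j_!(\mathbb{G}_{m,X_K}/p^n) = 0$, so the resulting six-term sequence collapses to give the asserted isomorphism.

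For the second arrow, consider the commutative diagram with exact rows
\[ \begin{CD}
1 @>>> 1 + p^2 i_*i^*\mathcal{O}_X @>>> i_*i^*\mathbb{G}_m @>>> i_*\mathbb{G}_{m,X_2} @>>> 1 \\
@. @VV{\alpha}V @VV{\mu}V @| @. \\
1 @>>> i_*(1 + p^2 \mathcal{O}_{\hat{X}}) @>>> i_*\mathbb{G}_{m,\hat{X}} @>>> i_*\mathbb{G}_{m,X_2} @>>> 1
\end{CD} \]
whose top row is the second sequence at the start of \S\ref{comparisonsection}, and whose bottom row is obtained by applying the exact functor $i_*$ to the analogous sequence on the \'etale site of $X_1$ (surjectivity at stalks by Hensel's lemma, since $\hat{\mathcal{O}}_{X,x}^{sh}$ is local with $p$ in its maximal ideal). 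The snake lemma yields a short exact sequence
\[ 0 \to i_*i^*\mathbb{G}_m \overset{\mu}{\to} i_*\mathbb{G}_{m,\hat{X}} \to Q \to 0, \quad Q := \cok(\alpha), \]
so it suffices to show that multiplication by $p^n$ is an isomorphism on $Q$.

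To that end, Lemma \ref{logexp} applied to the $p$-adically complete sheaf $i_*\mathcal{O}_{\hat{X}}$ gives a logarithm isomorphism $i_*(1+p^2 \mathcal{O}_{\hat{X}}) \cong i_*(p^2 \mathcal{O}_{\hat{X}})$ of abelian sheaves, while Proposition \ref{logexp2} yields an analogous identification $(1+p^2 i_*i^*\mathcal{O}_X) \otimes \mathbb{Z}/p^n \cong p^2 i_*i^*\mathcal{O}_X/p^{n+2} i_*i^*\mathcal{O}_X$. Under these (both constructed via log on the completion), $\alpha \otimes \mathbb{Z}/p^n$ is the canonical map $p^2 i_*i^*\mathcal{O}_X/p^{n+2} i_*i^*\mathcal{O}_X \to i_*(p^2 \mathcal{O}_{\hat{X}}/p^{n+2}\mathcal{O}_{\hat{X}})$, which is an isomorphism because $\mathcal{O}_{X,x}^{sh}/p^{n+2} \to \hat{\mathcal{O}}_{X,x}^{sh}/p^{n+2}$ is one at every geometric point $x$ of $X_1$ by the standard Noetherian completion theory. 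Applying the snake lemma to $0 \to A \to B \to Q \to 0$ with multiplication by $p^n$ then forces $Q/p^n = 0$, and combined with the $p$-torsion-freeness of $B \cong i_*(p^2\mathcal{O}_{\hat{X}})$ (and hence of $A \subset B$), also $Q[p^n] = 0$. The principal obstacle is the compatible matching of Lemma \ref{logexp} with Proposition \ref{logexp2} via the Noetherian identification of their mod-$p^{n+2}$ reductions; once this is in place, the rest is a routine diagram chase.
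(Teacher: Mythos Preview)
Your proof is correct and follows essentially the same approach as the paper: both reduce to the diagram comparing $1+p^2 i_*i^*\mathcal{O}_X$ and $i_*(1+p^2\mathcal{O}_{\hat X})$ via the logarithm identifications of Lemma~\ref{logexp} and Proposition~\ref{logexp2}. The only organizational difference is that the paper applies the five lemma directly to the two snake sequences, whereas you isolate the cokernel $Q=\cok(\alpha)$ and show it is uniquely $p$-divisible; these are equivalent packagings of the same diagram chase. One small point you leave implicit is that $\alpha$ (and hence $\mu$) is injective because $\mathcal{O}_{X,x}^{sh}\hookrightarrow\widehat{\mathcal{O}_{X,x}^{sh}}$, but this is harmless.
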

\begin{proof}
Since $j_!\mathbb{G}_m$ is $p$-divisible ($j_!$ being exact), we have $\mathbb{G}_{m,X}/p^n=i_*i^*\mathbb{G}_{m,X}/p^n$. Now we have a commutative diagram
\[ \begin{CD}
0 @>>> 1+p^{2}i^*\mathcal{O}_X @>>> i^*\mathbb{G}_{m,X} @>>> \mathbb{G}_{m,X_2} @>>> 0 \\
@. @VVV @VVV @| @. \\
0 @>>> 1+p^{2}\mathcal{O}_{\hat{X}} @>>> \mathbb{G}_{m,\hat{X}} @>>> \mathbb{G}_{m,X_2} @>>> 0 \\
\end{CD} \]
so reducing modulo $p^n$ we see that the result follows from Proposition \ref{logexp2} and the five lemma.
\end{proof}

\begin{proposition}\label{surj1}
The canonical map $\Br(X)\to H^2(X_1,\mathbb{G}_{m,\hat{X}})$ is surjective on $p$-primary torsion components.
\end{proposition}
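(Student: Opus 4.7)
The plan is to deduce the proposition from a comparison of two-term Kummer complexes. Write $F_n := [\mathbb{G}_{m,X} \xrightarrow{p^n} \mathbb{G}_{m,X}]$ on $X$ and $F'_n := [\mathbb{G}_{m,\hat{X}} \xrightarrow{p^n} \mathbb{G}_{m,\hat{X}}]$ on $X_1$, each placed in degrees $0$ and $1$, so that the distinguished triangle $\mathbb{G}_m \xrightarrow{p^n} \mathbb{G}_m \to F_n$ gives the Kummer short exact sequence $0 \to H^1(X,\mathbb{G}_m)/p^n \to \mathbb{H}^2(X,F_n) \to \Br(X)[p^n] \to 0$, and similarly for $F'_n$ and $H^2(X_1,\mathbb{G}_{m,\hat{X}})[p^n]$. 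If the natural map $\mathbb{H}^2(X,F_n) \to \mathbb{H}^2(X_1,F'_n)$ induced by $\mathbb{G}_{m,X} \to i_*\mathbb{G}_{m,\hat{X}}$ is an isomorphism, then a snake lemma chase on these short exact sequences yields surjectivity of $\Br(X)[p^n] \to H^2(X_1,\mathbb{G}_{m,\hat{X}})[p^n]$ for every $n$, hence on $p$-primary torsion.

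So I would analyze the map of complexes $F_n \to i_*F'_n$ on $X$. Its effect on the cohomology sheaf $H^1$ is the map $\mathbb{G}_{m,X}/p^n \to i_*\mathbb{G}_{m,\hat{X}}/p^n$, which is an isomorphism by Corollary \ref{logexp3}. On $H^0$, Lemma \ref{torsion} identifies $i^*\mu_{p^n,X}$ with $\mu_{p^n,\hat{X}}$, so the standard adjunction sequence $0 \to j_!j^*\mathcal{F} \to \mathcal{F} \to i_*i^*\mathcal{F} \to 0$ applied to $\mathcal{F}=\mu_{p^n,X}$ shows that $\mu_{p^n,X} \to i_*\mu_{p^n,\hat{X}}$ is surjective with kernel $j_!\mu_{p^n,X_K}$. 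Running the long exact sequence of cohomology sheaves of the mapping cone, I conclude that the cone of $F_n \to i_*F'_n$ is quasi-isomorphic to $j_!\mu_{p^n,X_K}[1]$.

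The final and crucial step, and the main obstacle, is to show $R\Gamma(X,j_!\mu_{p^n,X_K}) = 0$; this is the only place where the properness of $f$ genuinely enters. Applying $R\Gamma(X,-)$ to the excision triangle $j_!\mu_{p^n,X_K} \to \mu_{p^n,X} \to i_*\mu_{p^n,\hat{X}}$, the second map becomes the restriction $R\Gamma(X,\mu_{p^n,X}) \to R\Gamma(X_1,\mu_{p^n,\hat{X}})$, which is an isomorphism by proper base change applied to $f$ over the henselian discrete valuation ring $V$ with torsion coefficients. Therefore $R\Gamma(X,j_!\mu_{p^n,X_K}) = 0$, the map $R\Gamma(X,F_n) \to R\Gamma(X_1,F'_n)$ is a quasi-isomorphism in every degree, and the snake lemma argument of the first paragraph concludes the proof.
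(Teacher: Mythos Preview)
Your argument is correct and rests on the same three ingredients as the paper's proof: Lemma \ref{torsion}, Corollary \ref{logexp3}, and proper base change over the henselian base $V$. The organisation, however, is different. The paper first strips off the $p$-power torsion subsheaves $\mathcal{T}\subset\mathbb{G}_{m,X}$ and $\mathcal{F}\subset\mathbb{G}_{m,\hat{X}}$, identifies $H^*(X,\mathcal{T})\cong H^*(X_1,\mathcal{F})$ via proper base change, and then reduces by a diagram chase to the $p$-torsion-free quotients $\mathbb{G}'_m:=\mathbb{G}_m/\mathcal{T}$ and $\mathbb{G}'_{m,\hat{X}}:=\mathbb{G}_{m,\hat{X}}/\mathcal{F}$; for these the Kummer sequence is an honest short exact sequence of sheaves, Corollary \ref{logexp3} yields $\mathbb{G}'_{m,X}/p^n\cong\mathbb{G}'_{m,\hat{X}}/p^n$, and surjectivity on $p^n$-torsion follows. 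You instead keep $\mathbb{G}_m$ intact, compare the two-term Kummer complexes directly, identify the cone of $F_n\to i_*F'_n$ with $j_!\mu_{p^n}[1]$, and kill its hypercohomology in one stroke by proper base change. Your packaging is a bit slicker---a single vanishing statement replaces the intermediate reduction to $\mathbb{G}'_m$---while the paper's version stays at the level of sheaves and avoids derived-category language.
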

\begin{proof}
Let $\mathcal{T}\subset\mathbb{G}_{m,X}$ and $\mathcal{F}\subset\mathbb{G}_{m,\hat{X}}$ be the $p$-power torsion subsheaves, and define $\mathbb{G}'_{m,X}:=\mathbb{G}_m/\mathcal{T}$ and $\mathbb{G}'_{m,\hat{X}}:=\mathbb{G}_m/\mathcal{F}$. By Lemma \ref{torsion} $\mathcal{F}$ is a finite constant sheaf and we have an exact sequence
\begin{equation}\label{exseqtorsion} 0\to j_!\mathbb{Q}_p/\mathbb{Z}_p(1)\to\mathcal{T}\to\mathcal{F}\to 0.
\end{equation}
By the proper base change theorem we have $H^n(X,\mathcal{T})=H^n(X_1,\mathcal{F})=H^n(X,\mathcal{F})$ for all $n$, hence we deduce a commutative diagram with exact rows
\[ \begin{CD}
H^2(X,\mathcal{F}) @>>> H^2(X,\mathbb{G}_m) @>>> H^2(X,\mathbb{G}'_{m,X}) @>>> H^3(X,\mathcal{F}) \\
@| @VVV @VVV @| \\
H^2(X_1,\mathcal{F}) @>>> H^2(X_1,\mathbb{G}_{m,\hat{X}}) @>>> H^2(X_1,\mathbb{G}'_{m,\hat{X}}) @>>> H^3(X_1,\mathcal{F})
\end{CD} \]
and since $\Br(X)=H^2(X,\mathbb{G}_m)$ is torsion (\cite[II, 1.4]{brauer}) an easy diagram chase shows that it suffices to prove that the map
\[ H^2(X,\mathbb{G}'_{m,X})\to H^2(X_1,\mathbb{G}'_{m,\hat{X}}) \]
is surjective on $p$-primary torsion. Now since the functor $j_!$ is exact, $j_!\mathbb{Q}_p/\mathbb{Z}_p(1)$ is divisible, so from the sequence (\ref{exseqtorsion}) for all $n$ we get an exact sequence
\[ 0\to\mathcal{F}/p^n\to\mathbb{G}_{m,X}/p^n\to\left(\mathbb{G}'_{m,X}\right)/p^n\to 0 \]
and therefore by Corollary \ref{logexp3} we deduce isomorphisms
\[ \mathbb{G}_{m,X}'/p^n=\mathbb{G}_{m,\hat{X}}'/p^n \]
for all $n$. Hence for all $n$ we have a commutative diagram with exact rows
\[ \begin{CD}
H^1(X,\mathbb{G}'_{m,X}/p^n) @>>> H^2(X,\mathbb{G}'_{m,X})[p^n] @>>> 0 \\
@| @VVV @. \\
H^1(X_1,\mathbb{G}_{m,\hat{X}}'/p^n) @>>> H^2(X_1,\mathbb{G}_{m,\hat{X}}')[p^n] @>>> 0
\end{CD} \]
and the result follows.
\end{proof}

Denote by $\varprojlim_n^{(m)}$ the right derived functors of $\varprojlim_n$.

\begin{proposition}\label{limexact}
\mbox{}
\begin{enumerate}[(i)]
\item Suppose $\{\mathcal{F}_n\}_{n\in\mathbb{N}}$ is an inverse system of sheaves of abelian groups on $X_1$ such that for any \'etale map $h:U\to X_1$ with $U$ affine we have
\begin{enumerate}[(a)]
\item the transition maps $\mathcal{F}_{n+1}(U)\to\mathcal{F}_n(U)$ are surjective for all $n$
\item $H^i(U,\mathcal{F}_n)=0$ for all $i>0$ and any $n$.
\end{enumerate}
Then $\varprojlim_n^{(m)}\mathcal{F}_n=0$ for all $m\geq 1$.
\item $\varprojlim_n^{(m)}\mathbb{G}_{m,X_n}=0$ for $m\geq 1$.
\item There are canonical isomorphisms $H^i(X_1,\mathbb{G}_{m,\hat{X}})=H^i_{\cont}(X_1,\{\mathbb{G}_{m,X_n}\})$ for all $i$, where $H^*_{\cont}$ is the continuous \'etale cohomology, i.e. the derived functor of $\varprojlim_nH^0(X_1,-)$ (\cite{jannsen}).
\end{enumerate}
\end{proposition}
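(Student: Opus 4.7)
The plan is to prove (i) via the standard two-term presentation of $R\varprojlim_n$ available for $\mathbb{N}$-indexed inverse systems, then deduce (ii) by specializing to $\{\mathbb{G}_{m,X_n}\}$, and (iii) by commuting $R\Gamma(X_1,-)$ with $R\varprojlim_n$.

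For (i), I would invoke the fact that in the Grothendieck abelian category of \'etale sheaves of abelian groups on $X_1$, the derived functor $R\varprojlim_n \mathcal{F}_n$ of an $\mathbb{N}$-indexed system is represented by the two-term complex
$$\prod_n \mathcal{F}_n \xrightarrow{1-\sigma} \prod_n \mathcal{F}_n$$
placed in degrees $0$ and $1$, where $\sigma$ is the composite of the shift with the transition maps. This immediately yields $\varprojlim_n^{(m)}\mathcal{F}_n = 0$ for $m \geq 2$, and reduces the case $m=1$ to showing that the sheaf cokernel of $1-\sigma$ vanishes. The latter cokernel is the sheafification of the presheaf cokernel, so it suffices to check vanishing on a basis; for $U \to X_1$ affine \'etale, condition (a) lets one recursively solve $\pi_{n+1}(x_{n+1}) = x_n - y_n$ starting from $x_0 := 0$, exhibiting any $(y_n) \in \prod_n \mathcal{F}_n(U)$ as an image of $1 - \sigma$. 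Hence $\varprojlim_n^{(1)}\mathcal{F}_n = 0$. Condition (b), though not strictly used here, would allow a sectionwise verification via the spectral sequence $E_2^{p,q} = H^p(U, R^q\varprojlim_n \mathcal{F}_n) \Rightarrow R^{p+q}\varprojlim_n(\mathcal{F}_n(U))$, whose abutment is concentrated in degree $0$ by (a) applied in the category of abelian groups.

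Statement (ii) is a direct application of (i) to $\mathcal{F}_n = \mathbb{G}_{m,X_n}$: for $U \to X_1$ affine \'etale, the rigidity of \'etale maps under nilpotent thickenings gives a unique lift $\tilde{U}_n \to X_n$ and $\mathbb{G}_{m,X_n}(U) = \mathcal{O}_{\tilde{U}_n}^*$; the surjectivity of $\mathcal{O}_{\tilde{U}_{n+1}}^* \to \mathcal{O}_{\tilde{U}_n}^*$ is immediate, as any lift of a unit modulo a nilpotent ideal remains a unit, verifying (a). For (iii), (ii) implies $R\varprojlim_n \mathbb{G}_{m,X_n} = \varprojlim_n \mathbb{G}_{m,X_n} = \mathbb{G}_{m,\hat{X}}$ in the derived category of sheaves on $X_1$, and the commutation $R\Gamma(X_1,-) \circ R\varprojlim_n = R\varprojlim_n \circ R\Gamma(X_1,-)$ of derived limits yields
$$R\Gamma(X_1, \mathbb{G}_{m,\hat{X}}) = R\varprojlim_n R\Gamma(X_1, \mathbb{G}_{m,X_n}),$$
whose $i$-th cohomology is $H^i_{\cont}(X_1, \{\mathbb{G}_{m,X_n}\})$ by the definition of continuous \'etale cohomology.

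The main technical input behind this proof is the identification of $R\varprojlim_n$ with the two-term complex, which rests on the appropriate exactness of products in the \'etale topos (or, equivalently, a Roos-type condition for the ambient Grothendieck abelian category); once this is granted, (a) alone does all the work and the rest of the argument is essentially formal manipulation of limits and their interaction with $R\Gamma$.
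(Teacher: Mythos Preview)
Your argument for (i) hinges on the claim that $R\varprojlim_n$ of an $\mathbb{N}$-indexed system of \'etale sheaves is computed by the two-term complex $\prod_n\mathcal{F}_n\xrightarrow{1-\sigma}\prod_n\mathcal{F}_n$. As you note yourself, this requires countable products to be exact (AB4*) in the category of \'etale abelian sheaves on $X_1$, and you do not verify this. In fact AB4* can fail already for the \'etale topos of a point: for $k$ a field with absolute Galois group $G\cong\hat{\mathbb{Z}}$, take $\mathcal{G}_n=\mathbb{Z}[\mathbb{Z}/n]$ with augmentation onto $\mathcal{H}_n=\mathbb{Z}$; the element $(1,1,1,\dots)\in\prod_n\mathbb{Z}$ does not lift to the discrete product of the $\mathcal{G}_n$, since for any fixed open $H=m\hat{\mathbb{Z}}$ the augmentation on $\mathbb{Z}[\mathbb{Z}/n]^H$ has image $(n/\gcd(m,n))\mathbb{Z}$. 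So the two-term description is genuinely in doubt here, and with it your blanket vanishing of $\varprojlim_n^{(m)}$ for $m\geq 2$.

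This is exactly why hypothesis (b) is present and why it is \emph{not} redundant. The paper does not assume AB4*; instead it embeds $\{\mathcal{F}_n\}$ into an injective inverse system $\{\mathcal{I}_n\}$ and uses (b) to ensure that $\mathcal{G}_n(U)=\mathcal{I}_n(U)/\mathcal{F}_n(U)$ on the affine basis, so that the product sequence $0\to\prod\mathcal{F}_n\to\prod\mathcal{I}_n\to\prod\mathcal{G}_n\to 0$ is exact \emph{for these particular sheaves}. One then gets $\varprojlim_n^{(1)}\mathcal{F}_n=0$ and $\varprojlim_n^{(m)}\mathcal{F}_n=\varprojlim_n^{(m-1)}\mathcal{G}_n$; since $\{\mathcal{G}_n\}$ again satisfies (a) and (b), induction on $m$ finishes. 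Your dismissal of (b) is therefore the crux of the gap.

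The same issue propagates to (ii). You apply (i) directly to $\mathbb{G}_{m,X_n}$ after checking only (a), but (b) \emph{fails} for $\mathbb{G}_{m,X_n}$: for $U\to X_1$ affine \'etale with lift $\tilde{U}_n\to X_n$, one has $H^1(U,\mathbb{G}_{m,X_n})=\Pic(\tilde{U}_n)$, which need not vanish for an affine scheme. The paper circumvents this by using the exact sequence $1\to 1+p^2\mathcal{O}_{X_n}\to\mathbb{G}_{m,X_n}\to\mathbb{G}_{m,X_2}\to 1$ and the logarithm isomorphism $1+p^2\mathcal{O}_{X_n}\cong p^2\mathcal{O}_{X_n}$ to reduce to the coherent system $\{p^2\mathcal{O}_{X_n}\}$, where (b) holds by vanishing of quasi-coherent cohomology on affines. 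Your treatment of (iii) is fine and matches the paper.
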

\begin{proof}
First note that since $\Hom$ commutes with products in the right argument, $h^*$ commutes with products. If $d_n:\mathcal{F}_{n+1}\to\mathcal{F}_n$ are the transition maps, then we have a left-exact sequence
\[ 0\to\varprojlim_n\mathcal{F}_n\to\prod_n\mathcal{F}_n\overset{\text{id}-d_n}{\longrightarrow}\prod_n\mathcal{F}_n\to 0 \]
which is also right-exact by virtue of property (a) and the well-known exact sequence (\cite[1.4]{jannsen})
\[ 0\to\varprojlim_n\mathcal{F}_n(U)\to\prod_n\mathcal{F}_n(U)\overset{\text{id}-d_n}{\longrightarrow}\prod_n\mathcal{F}_n(U)\to\varprojlim_n{}^{(1)}\left(\mathcal{F}_n(U)\right)\to 0. \]
An object $\{\mathcal{I}_n\}_n$ of the category of inverse systems of sheaves of abelian groups is injective if and only if each $\mathcal{I}_n$ is injective and the transition maps $d_n:\mathcal{I}_{n+1}\to \mathcal{I}_{n}$ are split surjective (\cite[1.1]{jannsen}). In particular, the sequence
\[ 0\to\varprojlim_n\mathcal{I}_n\to\prod_n\mathcal{I}_n\overset{\text{id}-d_n}{\longrightarrow}\prod_n\mathcal{I}_n\to 0 \]
is exact. Now let $\{\mathcal{F}_n\}\subset\{\mathcal{I}_n\}$ with $\{\mathcal{I}_n\}$ an injective inverse system of sheaves of abelian groups, and consider the exact sequence
\[ 0\to \mathcal{F}_n\to \mathcal{I}_n\to \mathcal{G}_n\to 0 \]
where $\mathcal{G}_n=\mathcal{I}_n/\mathcal{F}_n$. Since for any \'etale $U\to X_1$ with $U$ affine we have $H^1(U,\mathcal{F}_n)=0$, it follows that $\mathcal{G}_n(U)=\mathcal{I}_n(U)/\mathcal{F}_n(U)$. In particular, the sequence
\[ 0\to\prod_n\mathcal{F}_n\to\prod_n\mathcal{I}_n\to\prod_n\mathcal{G}_n\to 0 \]
is exact. From this and the above we deduce an exact sequence
\[ 0\to\varprojlim_n\mathcal{F}_n\to\varprojlim_n\mathcal{I}_n\to\varprojlim_n\mathcal{G}_n\to 0 \]
hence $\varprojlim_n^{(1)}\mathcal{F}_n=0$ and $\varprojlim_n^{(m)}\mathcal{F}_n=\varprojlim_n^{(m-1)}\mathcal{G}_n$ for $m>1$. Since $\{\mathcal{G}_n\}$ also satisfies (a) and (b), part (i) follows by induction on $m$.

For (ii), consider the exact sequence ($n\geq 2$)
\[ 1\to 1+p^2\mathcal{O}_{X_n}\to\mathbb{G}_{m,X_n}\to\mathbb{G}_{m,X_2}\to 1. \]
Since $X$ is flat over $T$, the isomorphism of Lemma \ref{logexp} induces canonical isomorphisms $1+p^2\mathcal{O}_{X_n}\cong p^2\mathcal{O}_{X_n}$, compatible for varying $n$. So taking limits in the exact sequence we get surjective maps
\[ \varprojlim_n{}^{(m)}p^2\mathcal{O}_{X_n}\twoheadrightarrow\varprojlim_n{}^{(m)}\mathbb{G}_{m,X_n} \]
for all $m\geq 1$. Thus it suffices to show that $\varprojlim_n^{(m)}p^2\mathcal{O}_{X_n}=0$ for $m\geq 1$, which follows from (i). Since $\varprojlim_n$ commutes with taking global sections and preserves injectives (\cite[1.17]{jannsen}), (iii) follows from (ii).
\end{proof}

\begin{proposition}
If $k$ is finite, then the canonical map $H^2(X_1,\mathbb{G}_{m,\hat{X}})\to\varprojlim_nH^2(X_1,\mathbb{G}_{m,X_n})=\varprojlim_nH^2(X_n,\mathbb{G}_m)$ is surjective on $p$-primary torsion components.
\end{proposition}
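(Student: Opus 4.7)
The plan is to lift $p$-primary torsion classes directly through the Kummer sequence, exploiting the finiteness of étale cohomology with finite coefficients of a proper variety over a finite field to make compatible choices at every level.

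Fix $b=(b_n)\in\varprojlim_n H^2(X_n,\mathbb{G}_m)$ with $p^m b=0$. The Kummer sequence on $X_n$ gives a surjection
\[ H^2(X_n,\mu_{p^m})\twoheadrightarrow H^2(X_n,\mathbb{G}_m)[p^m]\to 0 \]
whose kernel is the image of $\Pic(X_n)/p^m$. Since étale cohomology is invariant under nilpotent thickenings, $H^2(X_n,\mu_{p^m})=H^2(X_1,\mu_{p^m})$, and this group is finite because $X_1$ is proper over the finite field $k_V$. Let $P_n\subset H^2(X_1,\mu_{p^m})$ denote the (non-empty) preimage of $b_n$; it is a coset of the finite subgroup $J_n:=\im(\Pic(X_n)/p^m\to H^2(X_1,\mu_{p^m}))$.

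The naturality of the transition maps $\Pic(X_{n+1})\to\Pic(X_n)$ together with the compatibility $b_{n+1}\mapsto b_n$ imply $J_{n+1}\subset J_n$ and $P_{n+1}\subset P_n$. Hence $\{P_n\}$ is a decreasing sequence of non-empty finite subsets of $H^2(X_1,\mu_{p^m})$, so $\bigcap_n P_n$ is non-empty. Pick $c\in\bigcap_n P_n$. By Lemma \ref{torsion}, $\mu_{p^m}$ is identified with the $p^m$-torsion subsheaf of $\mathbb{G}_{m,\hat{X}}$, yielding a map $H^2(X_1,\mu_{p^m})\to H^2(X_1,\mathbb{G}_{m,\hat{X}})$. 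Let $a$ be the image of $c$: then by construction the projection of $a$ to $H^2(X_n,\mathbb{G}_m)$ is the Kummer image of $c$, which equals $b_n$ for every $n$. Hence $a\mapsto b$, establishing surjectivity on $p$-primary torsion.

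The key subtlety — and the point at which the hypothesis $k$ finite enters essentially — is the compatible lifting step. Equivalently, in the Milnor exact sequence
\[ 0\to\varprojlim_n{}^{(1)}\Pic(X_n)\to H^2(X_1,\mathbb{G}_{m,\hat{X}})\to\varprojlim_n H^2(X_n,\mathbb{G}_m)\to 0 \]
furnished by Proposition \ref{limexact}(iii), surjectivity on the $p$-primary torsion parts requires controlling the obstruction coming from $\varprojlim^{(1)}$; finiteness of $H^2(X_1,\mu_{p^m})$ is what rules out this obstruction (whereas without it, a decreasing sequence of non-empty cosets can have empty intersection).
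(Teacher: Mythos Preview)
Your argument is correct and takes a genuinely different route from the paper's. The paper invokes Proposition \ref{limexact}(iii) and the Milnor exact sequence
\[ 0\to\varprojlim_n{}^{(1)}\Pic(X_n)\to H^2(X_1,\mathbb{G}_{m,\hat{X}})\to\varprojlim_nH^2(X_n,\mathbb{G}_{m}) \to 0, \]
and then proves the structural fact that $\varprojlim_n^{(1)}\Pic(X_n)$ is $p$-divisible; this is done by filtering $1+p\mathcal{O}_{X_n}$ to show the kernel of $\Pic(X_n)\to\Pic(X_1)$ is finite, and then comparing the images with $\Pic(X_1)\otimes\mathbb{Z}_p$. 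From $p$-divisibility of the kernel, any $p^m$-torsion element of the target acquires a $p^m$-torsion preimage.

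Your approach bypasses this analysis entirely: you lift each $b_n$ through the Kummer sequence into the single finite group $H^2(X_1,\mu_{p^m})$, and use the decreasing-cosets-in-a-finite-set trick to pick a compatible lift once and for all. This is more elementary and more direct; it uses finiteness of $k$ in the cleanest possible way (finiteness of $H^2(X_1,\mu_{p^m})$) rather than through finite generation of $\Pic(X_1)$. One small remark: your appeal to Lemma \ref{torsion} is slightly beside the point---what you actually need is just a map of sheaves $\mu_{p^m}\to\mathbb{G}_{m,\hat{X}}$ compatible with the projections to each $\mathbb{G}_{m,X_n}$, and that follows immediately from the universal property of the inverse limit together with the compatible inclusions $\mu_{p^m,X_n}\hookrightarrow\mathbb{G}_{m,X_n}$ (using topological invariance to identify all the $\mu_{p^m,X_n}$). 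What the paper's approach buys in exchange for the extra work is the stronger statement that the entire kernel $\varprojlim_n^{(1)}\Pic(X_n)$ is $p$-divisible, though this is not used elsewhere.
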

\begin{proof}
By Proposition \ref{limexact} (iii) and \cite[3.1]{jannsen} we have an exact sequence
\[ 0\to\varprojlim_n{}^{(1)}H^1(X_n,\mathbb{G}_{m,X_n})\to H^2(X_1,\mathbb{G}_{m,\hat{X}})\to\varprojlim_nH^2(X_n,\mathbb{G}_{m,X_n}) \to 0 \]
so it suffices to show that $\varprojlim_n^{(1)}H^1(X_n,\mathbb{G}_{m,X_n})=\varprojlim_n^{(1)}\Pic(X_n)$ is $p$-divisible. Now let $P:=\Pic(X_1)$, $P_n:=\text{im}(\Pic(X_n)\to P)$ and $Q_n:=\ker(\Pic(X_n)\to P)$. We have an exact sequence
\[ H^1(X_{k_V},1+p\mathcal{O}_{X_n})\to\Pic(X_n)\to P\to H^2(X_{k_V},1+p\mathcal{O}_{X_n}). \]
Consider the finite filtration of $1+p\mathcal{O}_{X_n}$
\[ 1=1+p^n\mathcal{O}_{X_n}\subset 1+p^{n-1}\mathcal{O}_{X_n}\subset\cdots\subset 1+p\mathcal{O}_{X_n}. \]
It has length $n-2$ and its graded is annihilated by $p$. It follows that for all $i$ the group $H^i(X_{k_V},1+p\mathcal{O}_{X_n})$ is annihilated by $p^{n-1}$. Since $k$ is finite, this group is finite and we have $\varprojlim_n^{(1)}Q_n=0$, $\varprojlim_n^{(1)}\Pic(X_n)=\varprojlim_n^{(1)}P_n$, and for all $n$ we have $p^{n-1}P\subset P_n$, so we have surjective maps $P/p^{n-1}P\to P/P_n$. Since $P$ is a finitely generated abelian group (again since $k$ is finite), $P/p^{n-1}P$ is finite for all $n$ and we get a surjective map
\[ P\otimes\mathbb{Z}_p=\varprojlim_nP/p^{n-1}P\twoheadrightarrow\varprojlim_nP/P_n. \]
So $\varprojlim_n^{(1)}P_n=\left(\varprojlim_nP/P_n\right)/P$ is a quotient of the $p$-divisible abelian group $(P\otimes\mathbb{Z}_p)/P$, hence is also $p$-divisible. 
\end{proof}

Together with Proposition \ref{surj1} this implies

\begin{corollary}\label{surj2}
If $k$ is finite, then the canonical map $\Br(X)\to\varprojlim_nH^2(X_n,\mathbb{G}_m)$ is surjective on $p$-primary torsion components.
\end{corollary}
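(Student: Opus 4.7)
The plan is short and formal. The canonical map $\Br(X)\to\varprojlim_nH^2(X_n,\mathbb{G}_m)$ factors naturally as
\[ \Br(X)\to H^2(X_1,\mathbb{G}_{m,\hat{X}})\to\varprojlim_nH^2(X_n,\mathbb{G}_m), \]
where the first arrow is the one studied in Proposition \ref{surj1}, and the second is the edge morphism of the Jannsen spectral sequence under the identification $H^2(X_1,\mathbb{G}_{m,\hat{X}})=H^2_{\cont}(X_1,\{\mathbb{G}_{m,X_n}\})$ of Proposition \ref{limexact}(iii), studied in the proposition immediately preceding the corollary. By functoriality this composition agrees with the canonical map, so it suffices to check that the composite of the two surjections on $p$-primary torsion is again a surjection on $p$-primary torsion.

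Given a $p$-primary torsion class $z\in\varprojlim_nH^2(X_n,\mathbb{G}_m)$, the previous proposition produces some lift $y\in H^2(X_1,\mathbb{G}_{m,\hat{X}})$. The kernel of the second arrow is $\varprojlim_n^{(1)}\Pic(X_n)$, which was shown to be $p$-divisible during that proof (using that $k$ is finite, so $P=\Pic(X_1)$ is finitely generated). Hence, if $p^Nz=0$, then $p^Ny$ lies in a $p$-divisible subgroup and can be written as $p^Nk$ for some $k$ in the kernel; replacing $y$ by $y-k$ yields a $p$-primary torsion lift of $z$. Proposition \ref{surj1} then supplies $x\in\Br(X)$ with $x\mapsto y$, whence $x\mapsto z$, as required.

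The only substantive point beyond invoking the two previous results is this adjustment step, which relies entirely on $p$-divisibility of $\varprojlim_n^{(1)}\Pic(X_n)$ already proved above; apart from that, the corollary is a purely formal consequence, and I anticipate no real obstacle.
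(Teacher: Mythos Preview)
Your proof is correct and follows exactly the paper's approach: the corollary is obtained by composing Proposition \ref{surj1} with the proposition immediately preceding it. One small remark: the adjustment step you spell out is unnecessary, because ``surjective on $p$-primary torsion components'' composes trivially---the preceding proposition already hands you a $p$-primary torsion lift $y$ (that is precisely its statement), so Proposition \ref{surj1} applies directly without modifying $y$; what you wrote is really a re-derivation of the preceding proposition's conclusion from the $p$-divisibility of its kernel.
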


From Corollary \ref{repcor} we can now deduce the following (cf. \cite[IV, 2.1]{artinmazur}).

\begin{theorem}\label{finiteheight}
Assume $k$ is finite. If $\Phi^1$ is formally smooth and $\Phi^2$ is of finite height, then the canonical map $V_p\Br(X_{\bar{V}})\to V_p\Br(X_{\bar{k}})$ is surjective.
\end{theorem}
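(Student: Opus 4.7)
The plan is to combine Corollaries~\ref{surj2} and~\ref{repcor}, passed to the direct limit over finite extensions $V \subset V' \subset \bar V$ of discrete valuation rings, in order to produce the natural specialization map $\Br(X_{\bar V}) \to \Br(X_{\bar k})$ and show that it is surjective on every $p^n$-torsion subgroup. The conclusion will then follow from a routine Mittag--Leffler argument.

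First, since the Brauer group of a proper smooth variety is torsion and $p^n$-torsion \'etale cohomology commutes with filtered colimits of schemes with affine transition maps, I have $\Br(X_{\bar V})=\varinjlim_{V'}\Br(X_{V'})$. Applying $\varinjlim_{V'}$ to Corollary~\ref{surj2} and composing with the surjection $\rho$ of Corollary~\ref{repcor} produces a map
\[ \Br(X_{\bar V}) \longrightarrow \varinjlim_{V'} \varprojlim_n H^2(X_{V'/p^nV'}, \mathbb{G}_m) \longrightarrow \Br(X_{\bar k}), \]
which I claim is surjective on every $[p^n]$-torsion subgroup. To see this, I would examine each of its three constituent surjections: (i) Proposition~\ref{surj1} is surjective on each $[p^n]$-torsion subgroup, as its proof uses the isomorphism $\mathbb{G}'_{m,X}/p^n \cong \mathbb{G}'_{m,\hat X}/p^n$ of Corollary~\ref{logexp3} and the five lemma; (ii) the comparison $H^2(X_1,\mathbb{G}_{m,\hat X}) \to \varprojlim_n H^2(X_n,\mathbb{G}_m)$ arising in the proof of Corollary~\ref{surj2} is surjective with kernel $\varprojlim_n^{(1)}\Pic(X_n)$, which is shown there to be $p$-divisible; (iii) by the proof of Corollary~\ref{repcor}, $\rho$ itself is surjective with $p$-divisible kernel. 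Since a surjection with $p$-divisible kernel restricts to a surjection on each $[p^n]$-torsion subgroup (snake lemma), and since filtered colimits preserve surjections, the composite is surjective on $\Br(-)[p^n]$ for every $n$.

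Finally, both $\Br(X_{\bar V})[p^n]$ and $\Br(X_{\bar k})[p^n]$ are finite abelian groups, a standard consequence of properness. Hence the kernels of the surjections $\Br(X_{\bar V})[p^n] \twoheadrightarrow \Br(X_{\bar k})[p^n]$ form an inverse system of finite groups, the Mittag--Leffler condition holds trivially, $\varprojlim_n^{(1)}$ vanishes, and the induced map on Tate modules $T_p\Br(X_{\bar V}) \to T_p\Br(X_{\bar k})$ is surjective. Tensoring with $\mathbb{Q}$ concludes the proof. The most delicate point is the upgrade from surjectivity on $p$-primary torsion, as explicitly stated in Proposition~\ref{surj1}, Corollary~\ref{surj2}, and Corollary~\ref{repcor}, to surjectivity on each individual $p^n$-torsion component; this is however already implicit in their proofs, all of which proceed by exhibiting $p$-divisible kernels.
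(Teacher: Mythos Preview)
Your overall architecture matches the paper's: pass Corollary~\ref{surj2} to the limit over $V'\subset\bar V$ and compose with the map $\rho$ of Corollary~\ref{repcor} to obtain a map $\Br(X_{\bar V})\to\Br(X_{\bar k})$ that is surjective on $p$-primary torsion. The divergence, and the gap, is in the final passage from ``surjective on $p$-primary torsion'' to ``surjective on $V_p$''.

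You attempt this by upgrading to surjectivity on each $[p^n]$-torsion subgroup and then invoking Mittag--Leffler. Two of your three justifications are fine (the proofs of Corollary~\ref{repcor} and of the proposition preceding Corollary~\ref{surj2} do exhibit $p$-divisible kernels), but your claim (i) about Proposition~\ref{surj1} is not: that proof does \emph{not} proceed by exhibiting a $p$-divisible kernel. It reduces to the sheaves $\mathbb{G}'_m$ via a diagram chase through the four-term rows involving $H^i(\mathcal{F})$, where $\mathcal{F}=\mu_{p^\infty}(V')$. That chase produces a lift of a $p^n$-torsion class whose $p^n$-th multiple lies in the image of $H^2(X,\mathcal{F})$, which is a nonzero finite group as soon as $V'$ contains a $p$-th root of unity; there is no reason the lift is itself $p^n$-torsion. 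So the assertion ``all of which proceed by exhibiting $p$-divisible kernels'' is false for Proposition~\ref{surj1}, and the level-wise surjectivity is unproved. Secondly, your finiteness claim for $\Br(X_{\bar V})[p^n]$ is not a ``standard consequence of properness'': $X_{\bar V}$ sits over the non-noetherian ring $\bar V$, and the only clean reason this group is finite is the injection $\Br(X_{\bar V})\hookrightarrow\Br(X_{\bar K})$ coming from \cite[II,~1.10]{brauer}.

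The paper sidesteps both issues with exactly that injection: since $\Br(X_{\bar V})\otimes\mathbb{Z}_p$ embeds in $\Br(X_{\bar K})\otimes\mathbb{Z}_p$, it is a finite direct sum of subgroups of $\mathbb{Q}_p/\mathbb{Z}_p$, i.e.\ a cofinitely generated $\mathbb{Z}_p$-module. For a surjection $A\twoheadrightarrow B$ of $p$-primary groups with $A$ cofinitely generated, the kernel is again cofinitely generated, hence $K/p^nK$ is bounded independently of $n$, and one gets surjectivity of $V_pA\to V_pB$ directly---no level-wise surjectivity and no Mittag--Leffler needed. Once you invoke the embedding into $\Br(X_{\bar K})$ (which you need anyway for your finiteness claim), the rest of your last paragraph becomes superfluous.
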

\begin{proof}
By Corollary \ref{surj2}, the canonical map
\[ \Br(X_{V'})\to\varprojlim_nH^2(X_{V',n},\mathbb{G}_m) \]
is surjective on $p$-primary torsion, hence so is the map
\[ \Br(X_{\bar{V}})\to\varinjlim_{V'\subset\bar{V}}\varprojlim_nH^2(X_{V',n},\mathbb{G}_m). \]
So by Corollary \ref{repcor} the composition
\[ \Br(X_{\bar{V}})\to\varinjlim_{V'\subset\bar{V}}\varprojlim_nH^2(X_{V',n},\mathbb{G}_m)\to\Br(X_{\bar{k}}) \]
is surjective on $p$-primary torsion. Finally we have $\Br(X_{V'})\subset \Br(X_{V'}\otimes_VK)$ (\cite[II, 1.10]{brauer}), hence $\Br(X_{\bar{V}})\otimes\mathbb{Z}_p\subset\Br(X_{\bar{K}})\otimes\mathbb{Z}_p$ is a finite direct sum of subgroups of $\mathbb{Q}_p/\mathbb{Z}_p$, and this implies that the map
\[ V_p\Br(X_{\bar{V}})\to V_p\Br(X_{\bar{k}}) \]
is surjective.
\end{proof}

\begin{corollary}\label{subquot}
Assume $k$ is finite. If $\Phi^1$ is formally smooth and $\Phi^2$ is of finite height, then $V_p\Br(X_{\bar{k}})$ is a subquotient of $V_p\Br(X_{\bar{K}})$.
\end{corollary}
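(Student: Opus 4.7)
The plan is to combine Theorem \ref{finiteheight} with the injectivity statement already extracted from \cite[II, 1.10]{brauer} and used at the end of that theorem's proof. Concretely, the assumptions of the corollary are exactly those of Theorem \ref{finiteheight}, so I get a surjection
\[ V_p\Br(X_{\bar{V}})\twoheadrightarrow V_p\Br(X_{\bar{k}}) \]
for free. It remains only to realize the source as a subspace of $V_p\Br(X_{\bar{K}})$.

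For that, I would argue as in the last lines of the proof of Theorem \ref{finiteheight}: for each finite extension $V\subset V'\subset\bar{V}$, the restriction map $\Br(X_{V'})\to\Br(X_{V'}\otimes_VK)$ is injective by \cite[II, 1.10]{brauer}, because $X_{V'}$ is regular. Passing to the filtered colimit over all such $V'$ yields an injection $\Br(X_{\bar{V}})\hookrightarrow\Br(X_{\bar{K}})$, and this injection stays injective after applying $T_p(-)\otimes\mathbb{Q}$, giving
\[ V_p\Br(X_{\bar{V}})\hookrightarrow V_p\Br(X_{\bar{K}}). \]
Composing this inclusion with the surjection from Theorem \ref{finiteheight} exhibits $V_p\Br(X_{\bar{k}})$ as the image of a subspace of $V_p\Br(X_{\bar{K}})$, i.e.\ as a subquotient.

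There is no real obstacle here: both inputs are already established, and the argument is essentially a two-line diagram. The only mild point to be careful about is that forming $V_p$ preserves injections of torsion abelian groups whose $p$-primary parts are cofinitely generated (which is the case here, as $\Br(X_{\bar{K}})\otimes\mathbb{Z}_p$ is a finite direct sum of copies of $\mathbb{Q}_p/\mathbb{Z}_p$ plus a finite group), so that the middle step truly produces a $\mathbb{Q}_p$-linear inclusion and not merely a map with finite kernel. This is exactly the observation recorded in the last paragraph of the proof of Theorem \ref{finiteheight}, so no new work is required.
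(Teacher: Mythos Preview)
Your argument is correct and is exactly what the paper intends: the corollary is stated without proof because it follows immediately from the surjection of Theorem \ref{finiteheight} together with the inclusion $V_p\Br(X_{\bar{V}})\hookrightarrow V_p\Br(X_{\bar{K}})$ already noted in the final paragraph of that theorem's proof.
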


Applied to our elliptic modular surface $E=E(N)\otimes_{\mathbb{Z}[\zeta_N]}W$ with $N\geq 3$, this implies

\begin{corollary}\label{fhcor}
If $k$ is finite and the formal Brauer group of $E_k$ has finite height, then $T_p\Br(E_{\bar{k}})=0$.
\end{corollary}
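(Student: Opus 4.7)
The plan is to combine the subquotient relation supplied by Corollary \ref{subquot} with the Hodge--Tate decomposition of $V_p\Br(E_{\bar{K}})$ provided by Corollary \ref{HT}.

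To invoke Corollary \ref{subquot} for $f:E\to\Spec W$, we need the formal smoothness of $\Phi^1$. Now $g:E\to X(N)\otimes_{\mathbb{Z}[\zeta_N]}W$ is a non-isotrivial elliptic fibration over a smooth projective curve (the $j$-invariant is non-constant on $X(N)$), which is exactly the setting of \S\ref{elliptic}. The discussion there therefore gives the required formal smoothness of $\Phi^1$. Combined with the hypothesis that $\Phi^2$ is of finite height, Corollary \ref{subquot} yields that $V_p\Br(E_{\bar{k}})$ is a subquotient of $V_p\Br(E_{\bar{K}})$ as a $G_{K_0}$-representation, the $G_{K_0}$-action on $V_p\Br(E_{\bar{k}})$ factoring through the quotient $G_{K_0}\twoheadrightarrow G_k$.

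By Corollary \ref{HT}, $V_p\Br(E_{\bar{K}})$ is a Hodge--Tate representation with weights contained in $\{-1,1\}$. Since the category of Hodge--Tate representations is abelian and Hodge--Tate weights are inherited by sub-objects and quotients, the subquotient $V_p\Br(E_{\bar{k}})$ is itself Hodge--Tate with all weights in $\{-1,1\}$. On the other hand, the inertia subgroup $I\subset G_{K_0}$ acts trivially on $V_p\Br(E_{\bar{k}})$; taking $I$-invariants of the Hodge--Tate decomposition yields
\[ V_p\Br(E_{\bar{k}})\otimes_{\mathbb{Q}_p}\hat{\bar{K}}^I\;=\;\bigoplus_{i\in\{-1,1\}}W^i\otimes\hat{\bar{K}}(-i)^I\;=\;0, \]
where the final equality uses the standard Tate vanishing $\hat{\bar{K}}(\pm 1)^I=0$. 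Hence $V_p\Br(E_{\bar{k}})=0$.

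Finally, for any abelian group $A$ the Tate module $T_pA$ is torsion-free: a $p$-torsion element $(a_n)_n$ would have to satisfy both $a_n=pa_{n+1}$ and $pa_{n+1}=0$, forcing $a_n=0$. Thus $T_p\Br(E_{\bar{k}})\otimes\mathbb{Q}=V_p\Br(E_{\bar{k}})=0$ implies $T_p\Br(E_{\bar{k}})=0$, as claimed. The proof is essentially a direct assembly of Corollaries \ref{HT} and \ref{subquot}; the only step requiring any thought is the verification that \S\ref{elliptic} applies to the universal modular surface $E(N)$.
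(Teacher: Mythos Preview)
Your proof is correct and follows essentially the same route as the paper's: invoke \S\ref{elliptic} for the formal smoothness of $\Phi^1$, apply Corollary \ref{subquot} to realize $V_p\Br(E_{\bar{k}})$ as a subquotient of $V_p\Br(E_{\bar{K}})$, and then use Corollary \ref{HT} together with the fact that an unramified Hodge--Tate representation with nonzero weights must vanish. The only step the paper makes explicit that you gloss over is the passage from the stated hypothesis (finite height of the formal Brauer group of $E_k$, i.e.\ of $\Phi^2\otimes_W k$) to finite height of $\Phi^2$ over $W$, which is needed to invoke Corollary \ref{subquot}; this is immediate since height is read off from the special fibre, but strictly speaking your phrase ``the hypothesis that $\Phi^2$ is of finite height'' assumes what has to be checked.
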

\begin{proof}
As noted in \ref{elliptic}, $\Phi^1$ is formally smooth over $W$, so $\Phi^2$ is representable by a formal Lie group over $W$. Moreover, it is easy to see that $\Phi^2$ has finite height since $\Phi^2\otimes_Wk$ does by assumption. So by the last corollary $V_p\Br(E_{\bar{k}})$ is a subquotient of $V_p\Br(E_{\bar{K}})$. It follows from Corollary \ref{HT} that $V_p\Br(E_{\bar{k}})$ is a Hodge-Tate representation whose weights are contained in $\{\pm 1\}$. Since it is an unramified representation, this is only possible if it is zero.
\end{proof}

\bibliographystyle{amsplain}
\bibliography{lmodsurfbib}
\end{document}